\declaretheorem[style = plain, numberwithin = section]{theorem}
\declaretheorem[style = plain,      sibling = theorem]{corollary}
\declaretheorem[style = plain,      sibling = theorem]{lemma}
\declaretheorem[style = plain,      sibling = theorem]{proposition}
\declaretheorem[style = definition, sibling = theorem]{definition}
\declaretheorem[style = remark,     sibling = theorem]{remark}
\declaretheorem[style = remark,     sibling = theorem]{claim}
\crefname{observation}{Observation}{Observations}
\Crefname{observation}{Observation}{Observations}
\crefname{conjecture}{Conjecture}{Conjectures}
\Crefname{conjecture}{Conjecture}{Conjectures}
\crefname{notation}{Notation}{Notations}
\Crefname{notation}{Notation}{Notations}
\Crefname{claim}{Claim}{Claims}
\Crefname{question}{Question}{Questions}
\newcommand{\CY}{Calabi--\kern-0.37exYau\xspace}
\newcommand{\ie}{\leavevmode\unskip, i.e.,\xspace}
\newcommand{\eg}{\leavevmode\unskip, e.g.,\xspace}
\newcommand{\Z}{\mathbb{Z}}    % Integers
\newcommand{\Q}{\mathbb{Q}}    % Rational numbers
\newcommand{\R}{\mathbb{R}}    % Real numbers
\newcommand{\C}{\mathbb{C}}    % Complex numbers
\renewcommand{\P}{\mathbb{P}}  % Projective space
\DeclarePairedDelimiter{\set}{\lbrace}{\rbrace}        % Set
\newcommand{\from}{\colon}
\newcommand{\sO}{\mathscr{O}}
\newcommand{\sE}{\mathscr{E}}
\newcommand{\sR}{\mathscr{R}}
\newcommand{\sL}{\mathscr{L}}
\newcommand{\sS}{\mathcal{S}}
\newcommand\restr[2]{{% we make the whole thing an ordinary symbol
  \left.\kern-\nulldelimiterspace % automatically resize the bar with \right
  #1 % the function
  \vphantom{\big|} % pretend it's a little taller at normal size
  \right|_{#2} % this is the delimiter
}}
\newcommand{\insec}{\cdot}
\newcommand{\NE}{\overline{\text{NE}}}
\newcommand{\product}{\Pi}
\DeclareMathOperator{\Pic}{Pic}
\DeclareMathOperator{\Sym}{Sym}
\DeclareMathOperator{\Gr}{Gr}
\DeclareMathOperator{\rk}{rk}
\titleformat*{\section}{\large\bfseries}
\titleformat*{\subsection}{\bfseries}
\titleformat*{\subsubsection}{\small\bfseries}
\title{\vspace{-2.4cm}\Large{\bfseries{Curve Classes on \CY Complete Intersections in Toric Varieties}}}
\author{Bjørn Skauli}
\date{}
\begin{document}
\maketitle

\begin{abstract}
  We prove the Integral Hodge Conjecture for curve classes on smooth varieties of dimension at least three constructed as a complete intersection of ample hypersurfaces in a smooth projective toric variety, such that the anticanonical divisor is the restriction of a nef divisor. In particular, this includes the case of smooth anticanonical hypersurfaces in toric Fano varieties. In fact, using results of Casagrande and the toric MMP, we prove that in each case, $H_2(X,\Z)$ is generated by classes of rational curves.
\end{abstract}

\section{Introduction}
On a smooth complex projective variety of dimension $n$, the vector space $H^k(X,\C)$ admits a Hodge decomposition into subspaces $H^{p,q}(X,\C)$, with $p+q = k$. The integral Hodge classes $H^{k,k}(X,\Z)$ are the classes in  $H^{2k}(X,\Z)$ which map to $H^{k,k}(X,\C)$ under the natural map
\[H^{2k}(X,\Z) \to H^{2k}(X,\C),\]
 and the class of any algebraic subvariety is an integral Hodge class. The Integral Hodge Conjecture asks whether the classes of algebraic subvarieties generate the integral Hodge Classes as a group.

A basic result in this direction is the Lefschetz (1,1)-theorem. This theorem states that the Integral Hodge Conjecture holds for codimension 1 classes. By the Hard Lefschetz theorem, this also implies that  Hodge Conjecture holds for degree $2n-2$ classes \ie classes of algebraic curves generate $H^{n-1,n-1}(X,\Q)$ as a vector space. 

However the Integral Hodge Conjecture might still fail for $H^{n-1,n-1}(X,\Z)$. Much work has been done exploring how this failure might occur, especially through constructing counterexamples to the Integral Hodge Conjecture for degree $2n-2$ classes. There are two ways in which the Integral Hodge Conjecture can fail and there are counterexamples illustrating both. The first way is through torsion classes in $H^{k,k}(X,\Z)$. Any torsion class is an integral Hodge class, and one can find counterexamples to the Integral Hodge Conjecture for curves by finding a torsion class in $H^{k,k}(X,\Z)$ which is not algebraic. In fact, the first counterexample to the Integral Hodge conjecture was of this form. In \cite{AH62}, Atiyah and Hirzebruch construct a projective variety with a degree 4 torsion class that is nonalgebraic.

The Integral Hodge Conjecture for curves can even fail modulo torsion. In \cite{Kol90}, Koll\'ar constructs counterexamples on projective hypersurfaces in $\P^4$ of high degree, on which there is a nontorsion, nonalgebraic class in $H^{n-1,n-1}(X,\Z)$.

On the other hand, by imposing restrictions on the geometry of the variety $X$, many positive results in the direction of the Integral Hodge Conjecture have also been found. In \cite{Voi06}, Voisin proves that for a complex projective threefold $X$ that is either uniruled or satisfies $K_X = \sO_X$ and $H^2(X,\sO_X) = 0$ the Integral Hodge Conjecture for curves holds. In \cite{Tot19}, Totaro shows more generally that it holds for all threefolds of Kodaira dimension 0 with $H^0(X,\sO(K_X)) \neq 0$. In \cite{BO20}, Benoist and Ottem construct a threefold $X$ such that $2K_X = 0$, and $X$ does not satisfy the Integral Hodge Conjecture, which shows that there is an important difference between assuming Kodaira dimension 0 and assuming that the canonical divisor is trivial. In \cite{Voi06}, Voisin also raises the question of whether the Integral Hodge Conjecture for curves holds for rationally connected varieties.

One reason for the interest in the Integral Hodge Conjecture for curves is to construct stable birational invariants of smooth projective varieties. Voisin introduced the group $Z^{2n-2} = H^{n-1,n-1}(X,\Z)/H^{n-1,n-1}(X,\Z)_{alg}$, (see \cite{SV05}, \cite{Voi06}, \cite{CV12} and \cite{Voi16}) which is a stable birational invariant, and is the trivial group for rational varieties. There are also other cases where the Integral Hodge Conjecture for varieties with trivial canonical divisor can give answers to other geometric questions. For instance in \cite{Voi17} Voisin relates the question of stable rationality of a cubic threefold to the question of whether a particular class in the intermediate Jacobian, an abelian variety of dimension 5, is algebraic.

In this paper, we will prove that the Integral Hodge Conjecture for curves holds on certain \CY varieties constructed as smooth complete intersections in smooth projective toric Fano varieties. The result will in fact hold more generally when the anticanonical divisor of the complete intersection $X$ is the restriction of a nef divisor on the ambient variety.
The only condition on the dimension of $X$ is that it must be at least 3. The main result is:
\begin{theorem}
	\label{thm:CompleteIntersectionIntroduction}
	Let $Y$ be a smooth projective toric variety and let $X \subset Y$ be a smooth complete intersection of ample hypersurfaces $H_1,\dots,H_k$, with $\dim X$ at least 3. Assume furthermore that $-K_Y - \sum_{i=1}^k H_i$ is nef on $Y$, so in particular $-K_X$ is nef. Then the Integral Hodge Conjecture for curves holds for $X$. More precisely, $H_2(X,\Z)$ is generated by classes of rational curves.
\end{theorem}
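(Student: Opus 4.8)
The plan is to reduce everything to the toric ambient space and then transport rational curves from $Y$ onto $X$. First I would invoke the Lefschetz hyperplane theorem: since $X$ is the intersection of $k$ ample divisors and $\dim X \geq 3$, cutting by each successive hypersurface induces an isomorphism on $H_2$ (the relevant numerical range is satisfied already at the final step, where the ambient dimension is $\dim X + 1 \geq 4$, and more easily before), so that $H_2(X,\Z) \isom H_2(Y,\Z)$, compatibly with pushforward of curve classes. For a smooth complete toric variety, $H_2(Y,\Z) \cong \CH_1(Y)$ is freely generated by the classes of torus-invariant curves (closures of one-dimensional orbits), each of which is a $\P^1$. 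Hence it suffices to show that every torus-invariant curve class of $Y$ is represented by a rational curve lying on $X$; equivalently, that under the above isomorphism $\NE(X) = \NE(Y)$ and that the generating rays are classes of rational curves on $X$.

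Next I would extract the key positivity from adjunction. Writing $K_X = (K_Y + H_1 + \cdots + H_k)|_X$, the hypothesis that $-K_X$ is nef together with the ampleness of the $H_i$ gives that $-K_Y|_X = -K_X + \sum_i H_i|_X$ is ample. Thus $-K_Y$ is strictly positive on the image of $\NE(X)$ in $N_1(Y)$. If I can prove $\NE(X) = \NE(Y)$, this forces $-K_Y$ to be ample, so $Y$ is Fano; in that Fano toric setting the Mori cone is generated by torus-invariant rational curves, each moving in a family (by bend-and-break and ampleness of $-K_Y$) large enough to sweep out $X$ and to be specialized into $X$, producing the rational curves required.

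The engine for establishing $\NE(X) = \NE(Y)$ and for producing curves on $X$ is the toric (Reid) MMP on $Y$ combined with Casagrande's analysis of toric contractions. I would run the $K_Y$-MMP, an explicit sequence of torus-equivariant extremal contractions and flips; each extremal ray is generated by a torus-invariant rational curve, and Casagrande's results describe the fibers of these contractions (themselves rational and toric) and control how the curve classes behave. Tracking the image $X_i$ of $X$ at each step -- which remains an ample complete intersection with $-K_{Y_i}|_{X_i}$ ample -- I would argue by induction on the Picard number, reducing to a base case (projective space or a toric Mori fiber space) in which rational curves on $X$ representing the generators are exhibited directly, and then deforming these back through the MMP. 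The strict Calabi--Yau case $K_X = 0$, where Mori theory on $X$ itself is vacuous, is precisely where the ambient MMP is indispensable.

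The hard part will be the realization step. Because $X$ has codimension $k$, a torus-invariant rational curve of $Y$ need not meet $X$, so I cannot simply intersect. The difficulty is to guarantee that the generators of $\NE(Y)$ are represented by rational curves actually contained in $X$, which requires controlling the full deformation family of each extremal curve -- using $-K_Y|_X$ ample and Casagrande's fiber description to force a member into $X$ -- while simultaneously checking that smoothness and the ample complete-intersection structure of $X_i$ persist at every MMP step. Making these deformation and specialization arguments uniform across all the contraction types is where the main technical work lies.
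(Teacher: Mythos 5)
Your opening step (Sommese--Lefschetz giving $H_2(X,\Z)\simeq H_2(Y,\Z)$, then reducing to representing toric generators by curves lying on $X$) is exactly the paper's frame, but everything after that defers or mishandles the actual content. First, a structural problem: the MMP on $Y$ only ever contracts \emph{extremal} rays, and extremal rays generate the cone $\NE(Y)$ over $\R_{\geq 0}$ but in general not the lattice $H_2(Y,\Z)$ over $\Z$. The paper instead uses Casagrande's theorem (\cref{thm:CasagrandeSpan}) that every effective class is a $\Z_{>0}$-combination of \emph{contractible} classes, a strictly larger collection; a non-extremal contractible class has a contraction with non-projective target, so it never appears as a step of any MMP run, and your induction on Picard number cannot see it. Second, and more seriously, the realization step you flag as ``the hard part'' is the entire proof in the paper, and none of your proposed mechanisms can substitute for it: (i) bend-and-break/sweeping arguments produce curves that \emph{meet} $X$, never curves \emph{contained} in $X$ --- when $C$ is not nef its deformations only sweep the exceptional locus $E$, and whether $X$ contains a line in a fiber of $E\to B$ is a genuine enumerative question (a general hypersurface of degree $\geq 6$ in $\P^4$ contains no lines at all, which is why the hypothesis $-K_X$ nef is needed and why no soft deformation argument can work); (ii) the MMP-tracking induction cannot get started, because the image of $X$ under a divisorial contraction or flip of $Y$ is in general neither smooth nor an ample complete intersection, so the inductive hypothesis is not preserved; (iii) the deduction that $Y$ is Fano is circular --- it follows from $\NE(X)=\NE(Y)$, which together with ampleness of $-K_Y|_X$ is essentially the theorem itself (in the paper, $Y$ being Fano is only a consequence, via \cref{prop:SemigroupGenerated}, and is never assumed).

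What the paper actually does for each contractible class $C$: by \cref{prop:ContractionStructure1} the exceptional locus of the contraction of $C$ is a projective bundle $\P(\sE)\to B$, and curves of class $C$ are precisely lines in its fibers, parametrized by the relative Grassmannian $\Gr(2,\sE)$. The complete intersection $X$ induces a section of the bundle $\sM=\bigoplus_{i=1}^{k}\Sym^{d_i}\sS^{*}\otimes p^{*}\sL_i$ on $\Gr(2,\sE)$, whose zero locus is exactly the set of lines contained in $X$. The hypothesis $-K_X\cdot C\geq 0$, fed through adjunction and the wall relation of $C$, yields $\rk\sM\leq\dim\Gr(2,\sE)$ (\cref{cor:NonNegExpDim}); then a perturbation argument --- twisting $\sM$ by $-\epsilon f^{*}\mathscr{A}$ for an ample class on $B$ and exploiting the explicit Schubert-calculus positivity of the Chern classes of $\Sym^{d}\sS^{*}$ --- shows $c_{\rk\sM}(\sM)\neq 0$ (\cref{prop:PermutationPositive}), so the section must vanish and $X$ contains a curve of class $C$ (\cref{cor:NegativeCurveContained}). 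This Chern-class non-vanishing on the relative Grassmannian, replacing ampleness of $\sM$ (which fails except in the $\P^1$-bundle case treated by Koll\'ar), is the key idea your proposal is missing; without it, or something equivalent, the proof does not close.
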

In the process of proving this theorem, we will also show:
\begin{proposition}
	\label{prop:SemigroupGenerated}
	Let $Y$ be a smooth projective toric variety and let $X \subset Y$ be a smooth complete intersection of ample hypersurfaces $H_1,\dots,H_k$, with $\dim X$ at least 3. Assume furthermore that $-K_Y - \sum_{i=1}^k H_i$ is nef on $Y$. Then the semigroup of effective curve classes on $Y$ is generated over $\Z$ by rational curves contained in the complete intersection $X$.
\end{proposition}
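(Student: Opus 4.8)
The plan is to reduce the statement to a concrete curve-existence problem and then attack it by a parameter count that makes the hypotheses on $\dim X$ and on $-K_X$ visible. Since $\dim X \geq 3$, the Lefschetz hyperplane theorem gives an isomorphism $H_2(X,\Z) \isom H_2(Y,\Z)$, so every curve in $X$ has a well-defined class in $H_2(Y,\Z)$ and, conversely, it makes sense to ask for a curve in $X$ representing a prescribed class of $Y$. Because $Y$ is a smooth complete toric variety, its Mori cone $\NE(Y)$ is rational polyhedral and generated by the classes of torus-invariant curves, and the semigroup of effective curve classes is finitely generated. It therefore suffices to exhibit, for each class $\beta$ in a fixed finite generating set of this semigroup, a genus-$0$ stable map to $X$ of class $\beta$: the images of its irreducible components are rational curves contained in $X$ whose classes are nonnegative multiples summing to $\beta$, so that $\beta$ lies in the subsemigroup generated by classes of rational curves in $X$.

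The engine for producing these curves is a dimension count on spaces of maps. For a class $\beta$ the space of maps $\P^1 \to Y$ of class $\beta$ has dimension at least $-K_Y \insec \beta + \dim Y$, and requiring the image to lie in $X = H_1 \cap \dots \cap H_k$ forces, for each $i$, a section of a line bundle of degree $H_i \insec \beta$ on $\P^1$ to vanish, that is, $H_i \insec \beta + 1$ conditions. After dividing by the $3$-dimensional automorphism group of $\P^1$ and using adjunction $-K_X = (-K_Y - \sum_i H_i)|_X$ together with $\dim X = \dim Y - k$, the expected dimension of the space of rational curves of class $\beta$ on $X$ is
\[
-K_X \insec \beta + \dim X - 3.
\]
By the hypotheses $-K_X \insec \beta \geq 0$ and $\dim X \geq 3$ this number is nonnegative, and this is precisely where both hypotheses are used. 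Equivalently, the locus of maps landing in $X$ is the zero scheme of a section of the convex vector bundle $\bigoplus_i \pi_*\,\mathrm{ev}^*\sO_Y(H_i)$ of rank $\sum_i (H_i \insec \beta + 1)$ on the space $\overline{M}_{0,0}(Y,\beta)$ of genus-$0$ stable maps to $Y$ of class $\beta$, where $\pi$ and $\mathrm{ev}$ are the universal curve and the evaluation map; the displayed number is the difference between the dimension of this space and the rank of the bundle.

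The main obstacle is to upgrade this nonnegative expected dimension into an actually nonempty space of maps to $X$, for every smooth $X$ of the given type and not merely a general one. Since the $H_i$ are ample, the bundle above is globally generated, so the point is to show that its relevant Chern class is nonzero: then the zero scheme is nonempty, and by upper semicontinuity of the dimension of the vanishing locus it stays nonempty for the particular section cutting out $X$. I expect this positivity to follow from ampleness of the $H_i$ on the toric variety $Y$, reducing it to a statement about torus-invariant stable maps that can be analyzed combinatorially on the fan. The remaining structural input is twofold: that the count may be applied to a full generating set of $\NE(Y)$, and that nefness of $-K_X$, which a priori only bounds $-K_X \insec \beta$ on $\NE(X)$, controls these intersection numbers uniformly over $\NE(Y)$. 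Breaking this apparent circularity — generating $\NE(Y)$ by curves in $X$ while propagating the nefness needed to run the count — is where the toric minimal model program and Casagrande's results on the fibers of toric extremal contractions enter, allowing one to carry both the curve-generation and the positivity through each contraction by induction on the Picard rank.
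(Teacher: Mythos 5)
Your reduction (pass to a finite set of semigroup generators, realize rational curves in $X$ as the zero locus of a section of an induced bundle on a parameter space of rational curves in $Y$, and check that the expected dimension $-K_X \insec \beta + \dim X - 3$ is non-negative) is in the same spirit as the paper, and your dimension count is exactly the content of \cref{prop:SumDegreesBound} and \cref{cor:NonNegExpDim}. The genuine gap is at the step you defer: proving that the relevant top Chern class is non-zero. You write that you ``expect this positivity to follow from ampleness of the $H_i$'' via a combinatorial analysis of torus-invariant stable maps, but this is precisely the hard core of the theorem, and it does not follow from any general positivity statement: your bundle $\bigoplus_i \pi_*\mathrm{ev}^*\sO_Y(H_i)$, like the paper's $\sM$, is only nef and globally generated, and nef bundles can have vanishing top Chern class, so \cref{thm:ChernPositive} (which requires ampleness) does not apply. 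The paper's solution occupies the entire subsection on perturbation and positivity and relies on structure your approach discards: by using Casagrande's contractible classes (\cref{thm:CasagrandeSpan}) rather than arbitrary generators, the curves in question become lines in fibers of a Zariski-locally-trivial projective bundle $\P(\sE)\to B$ over a smooth projective toric base (\cref{prop:ContractionStructure1}), so the parameter space is the smooth relative Grassmannian $\Gr(2,\sE)$; non-vanishing of $c_r(\sM)$ is then proved by the perturbation argument of \cref{prop:PermutationPositive} (twist by $-\epsilon f^*\mathscr{A}$, which stays nef because each $\sO_{\P(\sE)}(H_i)$ is ample, and force every term of the resulting expansion to vanish) combined with explicit Schubert calculus for $\bigoplus_i\Sym^{d_i}\sS^*$ on a Grassmannian of lines (\cref{prop:EffectiveSymProdChern}, \cref{prop:SubbundleChernClasses}, \cref{cor:SumSubbundleChern}). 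On your space $\overline{M}_{0,0}(Y,\beta)$ none of this is available: a toric variety need not be convex, so the moduli space of stable maps can be reducible, singular and of excess dimension; the statement ``$c_{\mathrm{top}}\neq 0$'' must then be formulated and proved against a virtual fundamental class, i.e.\ as non-vanishing of a twisted Gromov--Witten-type invariant, and torus localization does not give this for free because fixed-point contributions can cancel. As it stands, the central claim of your proof is an unproven expectation.

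Two smaller points. First, your generating set: it is true that the semigroup of effective curve classes on a smooth projective toric variety is generated by classes of torus-invariant curves (by degenerating any curve under one-parameter subgroups), but you neither prove nor cite this; the paper instead uses \cref{thm:CasagrandeSpan}, and this choice is not cosmetic, since contractible classes are exactly the generators for which the structure theorem makes the positivity question tractable. Second, you correctly flag that nefness of $-K_X$ a priori only controls $-K_X\insec\beta$ for $\beta$ in $\NE(X)$, while the count must be run on generators of the effective semigroup of $Y$; but your proposed resolution (``induction on the Picard rank'' through the toric MMP) is never developed into an argument and is not what the paper does --- the paper applies the nefness hypothesis directly to each contractible class $C$, rewriting $-K_X\insec C$ via adjunction as $-K_Y\insec C-\sum_i H_i\insec C$ and feeding it into \cref{cor:NegativeCurveContained}. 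So even granting the positivity, your outline does not close this loop.
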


The main challenge in proving the Integral Hodge Conjecture for curves is finding algebraic representatives of generators of the group $H_2(X,\Z)$. In \cite{Cas03}, Casagrande proves that for the ambient toric variety $Y$, the group $H_2(Y,\Z)$ is generated by the classes of so-called contractible curves, which are algebraic. So we will prove that $H_2(X,\Z)$ also contains algebraic representatives of classes of contractible curves. The proof of \cref{thm:CompleteIntersectionIntroduction} is inspired by an argument given by Koll\'ar in an appendix to \cite{Bor91}, where he proves that for an anticanonical hypersurface $X$ in a Fano variety $Y$, the cones of effective curves $\NE(X)$ and $\NE(Y)$ coincide.

The structure of the paper is as follows: We first recall the main definitions and results used in this paper, in particular the results from \cite{Cas03} in \cref{sec:Preliminaries}. Then in \cref{sec:CompleteIntersection} we will give a proof of \cref{thm:CompleteIntersectionIntroduction}. 
The main result in \cite{Cas03} is that on smooth projective toric varieties, the semigroup of effective curve classes is generated over $\Z$ by contractible classes. To prove \cref{thm:CompleteIntersectionIntroduction} we will prove that, with assumptions as in the theorem, the complete intersection contains curves in each contractible class. To prove this, we will for a given contractible class construct a vector bundle such that the zero set of a section corresponds to curves of the given contractible class contained in $X$. We will then use ampleness of the hypersurfaces defining $X$ to check that the top Chern class of this bundle is nonzero.

\subsection*{Acknowledgements}
I would like to thank my advisor John Christian Ottem for suggesting this question, and for his guidance in writing this paper and patience throughout the writing process. I am also grateful to the anonymous referee for pointing out mistakes in an earlier version, and for comments greatly improving the exposition in the paper.

\section{Preliminaries}
\label{sec:Preliminaries}

Let $X$ be a smooth projective variety over $\C$ of dimension $n$. The \emph{integral Hodge classes}  $H^{k-1,k-1}(X,\Z)$ are the classes in $H^{2k}(X,\Z)$ that map to the subspace $H^{k,k}(X,\C)$ of the Hodge decomposition of $H^{2k}(X,\C)$ under the natural map 
\[H^{2k}(X,\Z) \to H^{2k}(X,\C).\]
We will write $H^{2k}(X,\Z)_{alg}$ for the subgroup of $H^{2k}(X,\Z)$ generated by classes of algebraic subvarieties of $X$. The Integral Hodge Conjecture asks if any integral Hodge class is a linear combination of classes of algebraic varieties, in other words if $H^{2k}(X,\Z)_{alg} = H^{k,k}(X,\Z)$.

We will focus on the Integral Hodge Conjecture for curves, which is the statement that $H^{n-1,n-1}(X,\Z)$ is generated by the classes of algebraic curves contained in $X$. Recall that on a smooth, projective toric, or more generally rational variety $Y$, $\dim H^i(Y,\sO_Y) = 0$ for $i > 0$, and $H_2(Y,\Z)$ is torsion free. As a consequence,
\[ H^{n-1,n-1}(Y,\Z) \simeq H_2(Y,\Z).\]
By the Lefschetz Hyperplane theorem, the same is true for an ample hypersurface $X \subset Y$ of dimension at least 3.

We also recall the definition of the Neron-Severi space $N_1(X)$, the space of 1-cycles modulo numerical equivalence:
\begin{definition}
Let $X$ be a smooth projective complete variety. We define the vector space
\[N_1(X) \coloneqq \set{\text{1-cycles in X}}/\equiv_{num} \otimes \R. \]
\end{definition}

The varieties we study in this paper are constructed starting from toric varieties. For a general introduction to toric varieties, one can see \eg \cite{CLS11}. We will recall some facts about toric varieties that we will use throughout.

A toric variety $Y$ corresponds to a fan $\Sigma$ in a real vector space $N_\R$, and many geometric properties of $Y$ are encoded by combinatorial properties of $\Sigma$. On a smooth, projective toric variety $Y$ defined by a fan $\Sigma$, the group generated by curve classes up to  numerical equivalence is isomorphic to the integral relations between primitive generators $x_i$ of the rays of $\Sigma$. The relations corresponding to torus invariant curves are called \emph{wall relations}. Furthermore, the intersection number $D_i \cdot C$ between a torus-invariant divisor $D_i$, corresponding to the ray spanned by $x_i$, and a curve $C$ corresponding to a wall relation $a_1x_1 + \cdots + a_{n+1}x_{n+1} = 0$ is the coefficient $a_i$ (thus 0 if the generator of the ray does not occur in the wall relation). Here $n$ is the dimension of $N_\R$, which also equals $\dim{Y}$. We will also need the observation that  since the anticanonical divisor $-K_Y$ is the sum of all the torus-invariant divisors, the intersection number $-K_Y \insec C$ equals the sum of all coefficients in the wall relation.

\begin{proposition}[{\cite[Proposition 6.4.1]{CLS11}}]
  Let $\Sigma$ be a simplicial fan in $N_\R$ with convex support of full dimension. Then there are dual exact sequences:
\[ 0 \to M_\R \xrightarrow{\alpha} \R^{\Sigma(1)} \xrightarrow{\beta} \Pic(Y_\Sigma) \otimes \R \to 0 \]
\[ 0 \to N_1(Y_\Sigma) \xrightarrow{\beta^*} \R^{\Sigma(1)} \xrightarrow{\alpha^*} N_\R \to 0 \]
where
\[ \alpha^*(e_\rho) = u_\rho \]
\[ \beta^*([C]) = (D_\rho \insec C)_{\rho \in \Sigma(1)}.\]
We write $\Sigma(1)$ for the rays of $\Sigma$, $M_\R$ is the dual space to $N_\R$, $e_\rho$ the standard basis vectors of $\R^{\Sigma(1)}$, $u_\rho$ the primitive generator of the ray $\rho \in \Sigma(1)$ and $C \subset Y_\Sigma$ a complete irreducible curve.
\end{proposition}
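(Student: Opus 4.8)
The plan is to establish the top row first as the \emph{divisor sequence} of $Y_\Sigma$, and then to obtain the bottom row for free by dualizing, since both rows consist of finite-dimensional real vector spaces. I would take $\alpha$ to be the map recording the divisor of a character, $\alpha(m) = (\langle m, u_\rho\rangle)_{\rho \in \Sigma(1)}$, which is precisely $\div(\chi^m)$ written in the basis of torus-invariant prime divisors $D_\rho$, and $\beta$ to be the natural map $\R^{\Sigma(1)} = \bigoplus_\rho \R \insec D_\rho \to \Pic(Y_\Sigma) \otimes \R$ sending $e_\rho \mapsto [D_\rho]$. Exactness of the top row then breaks into three routine checks. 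Surjectivity of $\beta$ holds because on a simplicial fan every Weil divisor is $\Q$-Cartier, so $\Pic(Y_\Sigma) \otimes \R = \operatorname{Cl}(Y_\Sigma) \otimes \R$, and the latter is generated by the classes of the $D_\rho$ (the complement of the invariant divisors being the torus, which has trivial class group). Exactness in the middle is the statement that the torus-invariant principal divisors are exactly the $\div(\chi^m)$, which is the standard description of linear equivalence of invariant divisors. Finally, injectivity of $\alpha$ follows from the hypothesis that the support of $\Sigma$ is convex of full dimension: the ray generators $u_\rho$ then span $N_\R$, so $\langle m, u_\rho \rangle = 0$ for all $\rho$ forces $m = 0$.

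For the bottom row I would simply apply $\operatorname{Hom}(-,\R)$ to the top row. Dualization is exact on vector spaces, so this immediately yields a short exact sequence, and it remains only to identify the three terms and the two maps. The identifications $M_\R^* = N_\R$ (the defining duality of the lattices) and $(\R^{\Sigma(1)})^* \cong \R^{\Sigma(1)}$ (via the standard basis, identifying $e_\rho^*$ with $e_\rho$) are immediate. The key identification is $(\Pic(Y_\Sigma)\otimes\R)^* \cong N_1(Y_\Sigma)$, furnished by the intersection pairing $([C],[D]) \mapsto D \insec C$ being perfect on a projective simplicial toric variety.

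With these identifications in place, the two dual maps are computed by transposition. The transpose of $\alpha$ sends $e_\rho^* \mapsto u_\rho$, since for $m \in M_\R$ one has $\langle \alpha^*(e_\rho^*), m\rangle = \langle e_\rho^*, \alpha(m)\rangle = \langle m, u_\rho\rangle$, giving $\alpha^*(e_\rho) = u_\rho$ under the identification with $N_\R$. The transpose of $\beta$ sends a curve class $[C]$ to the functional $e_\rho \mapsto \langle [C], \beta(e_\rho)\rangle = \langle [C], [D_\rho]\rangle = D_\rho \insec C$, that is, $\beta^*([C]) = (D_\rho \insec C)_{\rho \in \Sigma(1)}$, exactly as asserted.

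The main obstacle is the single piece of genuine geometry hidden among the otherwise formal linear algebra: the perfectness of the intersection pairing, equivalently the identification $(\Pic(Y_\Sigma)\otimes\R)^* \cong N_1(Y_\Sigma)$. The inclusion $N_1(Y_\Sigma) \incl (\Pic(Y_\Sigma)\otimes\R)^*$ is essentially the definition of numerical equivalence, but surjectivity --- that every linear functional on $\Pic(Y_\Sigma)\otimes\R$ is realized by an actual $1$-cycle --- requires toric intersection theory, or equivalently the fact that on toric varieties numerical, homological, and rational equivalence of $1$-cycles all coincide and are dual to divisors. Everything else reduces to the exactness of dualization and the standard description of linear equivalence of torus-invariant divisors.
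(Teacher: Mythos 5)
The paper gives no proof of this proposition---it is quoted directly from \cite[Proposition 6.4.1]{CLS11}---so the natural comparison is with the textbook argument, and your route is exactly that one: first establish the divisor sequence $0 \to M_\R \xrightarrow{\alpha} \R^{\Sigma(1)} \xrightarrow{\beta} \Pic(Y_\Sigma)\otimes\R \to 0$ (injectivity of $\alpha$ because the rays span $N_\R$, middle exactness from the standard description of invariant principal divisors as the $\div(\chi^m)$, surjectivity of $\beta$ because simpliciality makes every Weil divisor $\Q$-Cartier so that $\Pic\otimes\R = \operatorname{Cl}\otimes\R$), then dualize and compute the transposes. All of those steps, including the transpose computations identifying $\alpha^*(e_\rho) = u_\rho$ and $\beta^*([C]) = (D_\rho \insec C)_\rho$, are correct, and you rightly isolate the one non-formal ingredient: the identification $(\Pic(Y_\Sigma)\otimes\R)^* \cong N_1(Y_\Sigma)$.

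There is, however, a genuine mismatch at precisely that ingredient: you justify the perfect pairing ``on a projective simplicial toric variety,'' but the proposition assumes only that $\Sigma$ is simplicial with convex support of full dimension---$Y_\Sigma$ need be neither projective nor even complete. Moreover, the duality splits into two parts of different status. That the pairing $N^1 \times N_1 \to \R$ (divisor classes and $1$-cycles, both modulo numerical equivalence) is perfect is pure linear algebra: numerical equivalence kills both radicals, so one gets injections $N_1 \incl (N^1)^*$ and $N^1 \incl (N_1)^*$ between finite-dimensional spaces, forcing equality of dimensions. The actual geometric content, which your appeal to projectivity papers over, is the isomorphism $\Pic(Y_\Sigma)\otimes\R \isom N^1(Y_\Sigma)$, i.e.\ that a Cartier divisor meeting every complete curve trivially is $\R$-principal---and this is exactly where the convexity hypothesis earns its keep. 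In toric terms: the Cartier data $(m_\sigma)$ of such a divisor must agree across every interior wall $\tau$, since the invariant curve $V(\tau)$ attached to a wall between two full-dimensional cones is a complete $\P^1$ and $D \insec V(\tau)$ measures the discrepancy; because the support is convex of full dimension, the maximal cones form a wall-connected chamber structure, so all $m_\sigma$ coincide and the support function is globally linear. With that substitution your argument proves the proposition in its stated generality; as written it proves only the projective case, which---to be fair---is the only case this paper ever invokes, since all of its toric varieties are smooth and projective.
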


  In the case where $Y$ is a smooth projective toric variety, $N_1(Y)$ is isomorphic to $H_2(Y,\Z) \otimes \R$ and $H_2(Y,\Z)$ embeds into $N_1(Y)$. Furthermore, on a smooth projective toric variety, $H_2(Y,\Z)$ is generated by the classes of torus-invariant curves. This is a special consequence of a theorem of Jurkiewicz and Danilov \cite[Theorem 12.4.4]{CLS11}

\subsection{Contractible Classes of a Toric Variety}
Because $H_2(X,\Z)$ embeds into $N_1(X)$, we can use tools from the Minimal Model Program to study the question of the Integral Hodge Conjecture, in particular the results of Casagrande (see \cite{Cas03}).

\begin{definition}[{\cite[Definition 2.3]{Cas03}}]
  A primitive curve class $\gamma \in H_2(Y,\Z)$, where $Y$ is a complete, smooth toric variety, is called \emph{contractible} if there exists an equivariant toric morphism $\pi \from Y \to Z$ with connected fibers such that for every irreducible curve $C \subset Y$,
\[ \pi(C) = \set{pt} \iff [C] \in \Q_{\geq 0}\gamma . \]
\end{definition}
Recall that a class $\gamma \in H_2(Y,\Z)$ is \emph{primitive} if it is not a positive integer multiple of any other class. We will call a curve $C \subset Y$ \emph{contractible} if its class in  $H_2(Y,\Z)$ is a contractible class. In particular, a contractible curve will always have a class that is primitive in $H_2(Y,\Z)$.

The structure of a contraction of a  contractible class is described by the following result.

\begin{proposition}[{\cite[Corollary 2.4]{Cas03} }]
  \label{prop:ContractionStructure1}
  Let $Y$ be a smooth complete toric variety of dimension $n$,  $\gamma \in \NE(Y)$ a contractible class and $\pi \from Y \to Z$ the associated contraction.

Suppose first that $\gamma$ is nef, so that its wall relation is:
\[x_1 + \cdots + x_e = 0. \]
Then $Z$ is smooth of dimension $n-e+1$ and $\pi \from Y \to Z$ is a $\P^{e-1}$-bundle.

Suppose now that $\gamma$ is not nef, so that its wall relation is:
\[x_1 + \cdots + x_e - a_1 y_1 - \cdots - a_r y_r = 0 \, \, r > 0 .\]
Then $\pi$ is birational, with exceptional loci $E \subset Y$, $B \subset Z$, $\dim E = n-r$, $\dim B = n-e-r + 1$ and $\restr{\pi}{E} \from E \to B$ is a $\P^{e-1}$-bundle.
\end{proposition}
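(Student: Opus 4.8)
The plan is to read off the geometry of the contraction $\pi \from Y \to Z$ from the combinatorics of the fan $\Sigma$ of $Y$, following Reid's analysis of toric morphisms. Since $\pi$ is an equivariant morphism with connected fibers contracting exactly the curves whose class lies in $\Q_{\geq 0}\gamma$, it is induced by a map of fans, and the entire structure is governed by the wall relation attached to $\gamma$. Write the relation as $\sum_\rho (D_\rho \insec \gamma)\, x_\rho = 0$ and partition the rays according to the sign of the coefficient $D_\rho \insec \gamma$: those with positive coefficient are $x_1,\dots,x_e$, those with negative coefficient are $y_1,\dots,y_r$, and the remaining generators have coefficient zero.

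First I would analyze the positive rays. The defining property of a contractible class forces the positive part of the relation to be primitive, so (consistently with the form in which the relation is stated) the positive coefficients are all equal to $1$ and the relation restricted to the contracted directions reads $x_1 + \cdots + x_e = 0$. This is exactly the primitive relation defining the fan of $\P^{e-1}$ in the rank-$(e-1)$ sublattice spanned by $x_1,\dots,x_e$; thus the subfan consisting of the cones on proper subsets of $\set{x_1, \dots, x_e}$ is the fan of $\P^{e-1}$, and this is the fiber that $\pi$ collapses to a point.

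In the nef case there are no negative rays ($r = 0$), so every maximal cone of $\Sigma$ decomposes as a product of a cone on the $\P^{e-1}$-fan with a cone built from the zero-coefficient rays. Smoothness of $Y$ makes each such cone unimodular, so these products are genuine, and the images of the zero-coefficient cones assemble into a smooth fan $\Sigma_Z$ with $\dim Z = n - (e-1) = n - e + 1$; the product structure then exhibits $\pi$ as a locally trivial $\P^{e-1}$-bundle. In the non-nef case ($r > 0$) the same local product analysis applies, but only along the locus where the negative rays are present. The exceptional locus $E$ is the union of the torus-orbit closures corresponding to cones containing all of $y_1,\dots,y_r$; since these impose $r$ independent conditions, $\dim E = n - r$. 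Its image $B = \pi(E)$ is cut out further by collapsing the $e-1$ fiber directions, giving $\dim B = n - e - r + 1$, and restricting the product structure to $E$ exhibits $\restr{\pi}{E} \from E \to B$ as a $\P^{e-1}$-bundle.

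The main obstacle is establishing the global product/bundle structure rather than merely the generic fiber. Showing that the cones of $\Sigma$ split compatibly as products over \emph{all} of the target — so that $\pi$ (resp.\ $\restr{\pi}{E}$) is locally trivial and not just a fibration with projective-space fibers — is the delicate combinatorial point, and it is precisely here that smoothness (unimodularity of the cones) and the primitivity of $\gamma$ (forcing unit positive coefficients) are indispensable. Once the product decomposition of each relevant cone is in hand, the dimension counts and the smoothness of $Z$ follow immediately.
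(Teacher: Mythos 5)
First, a point of comparison: the paper does not prove this proposition at all — it is imported verbatim from Casagrande \cite[Corollary 2.4]{Cas03} — so your argument must be judged on whether it stands on its own, and it does not. Your skeleton does match the way the result is actually proven in the literature (Reid's combinatorial analysis of toric contractions, extended by Casagrande to contractible classes): the positive rays span the fan of $\P^{e-1}$ in the sublattice they generate, the exceptional locus $E$ is the orbit closure of $\operatorname{cone}(y_1,\dots,y_r)$, and the dimension counts and smoothness of $Z$ fall out of a product decomposition of cones. But that product decomposition \emph{is} the theorem, and you never prove it. The sentence ``In the nef case there are no negative rays ($r=0$), so every maximal cone of $\Sigma$ decomposes as a product\dots'' presents the splitting as a consequence of the sign pattern of a single wall relation, when in fact nothing about one wall relation controls the cones of $\Sigma$ far from that wall. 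What is needed is that every maximal cone contains exactly $e-1$ of the rays $x_1,\dots,x_e$ and splits as the sum of the corresponding face of the $\P^{e-1}$-fan and a cone mapping isomorphically onto a cone of a fan for $Z$ (and the analogous statement for the star of $\operatorname{cone}(y_1,\dots,y_r)$ in the birational case). Establishing this requires using the full strength of contractibility — the existence of the equivariant morphism $\pi$ contracting \emph{precisely} the curves with class in $\Q_{\geq 0}\gamma$ — to constrain all walls of the fan simultaneously; this is the multi-page combinatorial core of Casagrande's paper. You explicitly flag this step as ``the delicate combinatorial point,'' but acknowledging a gap does not close it: as written, the proposal assumes exactly what is to be proven.

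A second, smaller error: you claim that ``the defining property of a contractible class forces the positive part of the relation to be primitive, so the positive coefficients are all equal to $1$.'' Primitivity of $\gamma$ in $H_2(Y,\Z)$ does not imply unit positive coefficients. In a smooth fan, unimodularity of the two maximal cones adjacent to a wall forces only the two \emph{off-wall} coefficients to equal $1$; an in-wall ray can appear with coefficient $2$ (or more) in the relation of a perfectly primitive curve class — one can write down smooth local fans realizing, say, $x_1 + x_2 + 2x_3 = 0$ with the class primitive. That all positive coefficients of a \emph{contractible} class are $1$ is itself a nontrivial consequence of the existence of the contraction, proven in \cite{Cas03}. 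In the present paper this form of the wall relation is built into the statement (and justified in the remark following the proposition), so you are entitled to assume it — but the reason you give for it is not a proof, and this matters because you later invoke ``primitivity of $\gamma$ (forcing unit positive coefficients)'' as one of the two pillars supporting the deferred splitting argument.
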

 By $\P^{e-1}$-bundle we mean a bundle that is locally trivial in the Zariski topology. In particular, there is a vector bundle $\sE$ on $B$ such that $E = \P(\sE)$.
\begin{remark}
  If $\gamma$ is nef \ie $\gamma \insec D \geq 0$ for all divisors $D$, then from how intersection numbers can be computed from the wall relation corresponding to $\gamma$, the wall relation can not have any negative coefficients. The positive coefficients in a wall relation corresponding to a  contractible curve are all equal to 1. It must therefore have the form described in the theorem. This happens precisely when curves of class $\gamma$ move to cover the entire toric variety.
\end{remark}
In contrast to contractions of extremal rays, if $\pi \from Y \to Z$ is a contraction of a contractible class, the target variety $Z$ is not necessarily projective even if $Y$ is projective. In fact, $Z$ is projective if and only if the contraction is a contraction of an extremal ray. However, the exceptional locus of the contraction of a contractible class has the structure of a projective bundle over a projective variety.

% \begin{remark}
% \label{rem:SmoothBase}
%   By looking at the standard toric affine charts, one can see that an irreducible torus-invariant subset of a smooth projective toric variety will also be a smooth projective toric variety. It therefore follows from \cref{prop:ContractionStructure1} that if $\restr{\pi}{E} \from E \to B$ is the restriction of a contraction to its exceptional locus, $\restr{\pi}{E}$ will be a map between smooth projective toric varieties.
% \end{remark}

The reason we wish to consider contractible classes, as opposed to only extremal ones is the following result by Casagrande, which says that these rays generate the subgroup $H_2(Y,\Z)_{alg}$, the subgroup of $H_2(Y,\Z)$ generated by classes of algebraic curves.
\begin{theorem}[{\cite[Theorem 4.1]{Cas03}}]
  \label{thm:CasagrandeSpan}
  Let $Y$ be a smooth projective toric variety. Then for every $\eta \in H_2(Y,\Z)_{alg} \cap NE(Y)$ there is a decomposition:
\[ \eta = m_1 \gamma_1 + \cdots + m_r\gamma_r \]
with $\gamma_i$ contractible and $m_i \in \Z_{>0}$ for all $i=1,\dots,r$.
\end{theorem}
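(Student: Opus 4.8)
The plan is to argue by induction on the Picard rank $\rho(Y) = \rk H_2(Y,\Z)$, peeling off one contractible class at a time. Since $Y$ is smooth and projective, $\NE(Y)$ is a rational polyhedral cone, and on a smooth projective toric variety the primitive generator of an extremal ray is contractible: the extremal contraction contracts exactly that ray, which is the defining property. When $\rho(Y) = 1$ the cone $\NE(Y)$ is a single ray; recalling that $H_2(Y,\Z)$ is generated by torus-invariant curves (so $H_2(Y,\Z)_{alg} = H_2(Y,\Z)$), its primitive generator $\gamma$ is contractible and every effective class is $m\gamma$ with $m \in \Z_{\geq 0}$, which is the base case. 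The entire content of the theorem is then an integrality statement: that the contractible classes form a generating set for the \emph{semigroup} $\NE(Y) \cap H_2(Y,\Z)_{alg}$, not merely for its extremal rays over $\R_{\geq 0}$.

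For the inductive step, fix $0 \neq \eta \in \NE(Y) \cap H_2(Y,\Z)_{alg}$, choose an extremal ray $R = \Q_{\geq 0}\gamma$ occurring with positive coefficient in a decomposition of $\eta$ over the extremal generators, and let $\pi \from Y \to Z$ be its contraction. Consider first the fiber-type case, where $\gamma$ is nef and, by \cref{prop:ContractionStructure1}, $\pi$ is a $\P^{e-1}$-bundle over a smooth projective toric variety $Z$. Here $\pi_*$ sits in a split short exact sequence
\[ 0 \to \Z\gamma \to H_2(Y,\Z) \xrightarrow{\pi_*} H_2(Z,\Z) \to 0, \]
the splitting coming from the projective bundle structure, so integrality is transported cleanly. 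Since $\pi_*\eta \in \NE(Z) \cap H_2(Z,\Z)_{alg}$, the inductive hypothesis gives $\pi_*\eta = \sum_j m_j \delta_j$ with $\delta_j$ contractible on $Z$ and $m_j \in \Z_{>0}$; using the bundle projection one lifts each $\delta_j$ to a contractible class $\tilde\delta_j$ on $Y$ (this compatibility of contractible classes with the bundle map is the one genuinely bundle-theoretic lemma required). Subtracting yields $\eta - \sum_j m_j \tilde\delta_j = c\gamma$ for some $c \in \Z$, and a compatible choice of lifts together with effectivity of $\eta$ forces $c \geq 0$; dropping the term if $c=0$, this exhibits $\eta$ as a positive integer combination of contractible classes.

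The main obstacle is the birational case, where $\gamma$ is not nef and the target $Z$ of \cref{prop:ContractionStructure1} is in general singular, and, if a non-extremal contractible class must be used to secure integrality, possibly not even projective, so the inductive hypothesis does not apply to $Z$ directly. To handle it one descends to the exceptional locus: by \cref{rem:SmoothBase} the map $\restr{\pi}{E} \from E \to B$ is a $\P^{e-1}$-bundle between smooth projective toric varieties and the contracted curves of class $\gamma$ are its fibers. The decisive arithmetic input is the observation recorded after \cref{prop:ContractionStructure1}, that the positive coefficients of the wall relation of a contractible class are all equal to $1$; it is precisely this unit-coefficient property that prevents the coefficients $m_i$ from being merely positive rationals, and it is the reason contractible classes, rather than only extremal ones, are the correct generating set.

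Thus I expect the hardest part to be controlling the effective cone and the integral lattice $H_2$ across a birational contraction with singular base, either by passing to a toric resolution and tracking how $\NE$ and the splitting behave, or by a direct combinatorial analysis of wall relations on the fan. Once one knows that a suitable contractible $\gamma$ can be subtracted so that $\eta - \gamma$ remains in $\NE(Y)$, termination is automatic: fixing an ample class $A$, the positive integer $A \insec \eta$ drops by $A \insec \gamma \geq 1$ at each step, so the peeling process halts after finitely many steps and writes $\eta = m_1\gamma_1 + \cdots + m_r\gamma_r$ with each $\gamma_i$ contractible and $m_i \in \Z_{>0}$.
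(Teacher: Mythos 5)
First, a structural point: the paper contains no proof of this statement at all --- it is Casagrande's theorem, imported verbatim from \cite[Theorem 4.1]{Cas03} and used as a black box, so there is nothing in the paper to compare your argument against; it can only be judged on its own terms. On those terms it is incomplete, and the fatal gap is the one you flag yourself: the birational case. Your induction on Picard rank must, for an arbitrary $\eta \in \NE(Y) \cap H_2(Y,\Z)$, peel off a contractible class leaving an effective \emph{integral} remainder, and when the available extremal rays are of birational type --- with singular, possibly non-projective target, so that the inductive hypothesis does not apply --- you do not carry this out; you list two candidate strategies (``passing to a toric resolution \dots or a direct combinatorial analysis of wall relations'') and stop. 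Since, as you correctly observe, the entire content of the theorem is integrality, and since the non-extremal contractible classes that make the statement true are needed precisely in these birational situations, what you have deferred is not a technical verification but the core of the theorem. A proof outline that says ``I expect the hardest part to be $X$'' and does not do $X$ is not a proof.

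Second, even the fiber-type case rests on two unproven claims: (a) every contractible class $\delta_j$ on $Z$ lifts to a contractible class $\tilde\delta_j$ on $Y$, and (b) the lifts can be chosen so that the remainder $c\gamma$ satisfies $c \geq 0$. Claim (b) is genuinely sensitive to the choice of lift. Take $Y = \P(\sO_{\P^1} \oplus \sO_{\P^1}(2)) \to Z = \P^1$ with fiber class $f = \gamma$ and $\delta$ the generator of $H_2(\P^1,\Z)$. The lifts of $\delta$ represented by irreducible curves are the negative section $C_0$ and the positive section $C_0 + 2f$; only $C_0$ is contractible (the positive section is nef with positive self-intersection, and its wall relation has a coefficient equal to $2$, so by the remark following \cref{prop:ContractionStructure1} no equivariant morphism contracts exactly its ray). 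Had one taken the lift $C_0 + 2f$, the effective class $\eta = C_0$ would give remainder $c = -2 < 0$. So your argument must single out the correct contractible lift and then prove nonnegativity of the remainder for that choice; for a sum of several $\delta_j$ this amounts to controlling how $\NE(Y)$ sits over $\NE(Z)$ integrally, which is a statement of essentially the same order of difficulty as the theorem being proved. Claim (a) likewise requires a fan-level construction you have not supplied. The termination argument via an ample class is fine, but it only runs once these gaps --- above all the birational case --- are filled.
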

As an immediate consequence of this and the fact that $H_2(Y,\Z)_{alg} = H_2(Y,\Z)$, we see that $H_2(Y,\Z)$ is also generated by the classes of contractible curves.

\subsection{Ample Vector Bundles and Positivity of Chern classes}
%Our strategy for checking that the Integral Hodge Conjecture holds on the class of complete intersections in \cref{thm:CompleteIntersectionIntroduction} will be to find a representative of each contractible curve class on the complete intersection. To do this we will construct a vector bundle with a section whose vanishing locus corresponds to contractible curves contained in the complete intersection $X$. We will then use the ampleness hypothesis to prove that the top Chern class is nonzero, which implies that the vanishing locus of any section must be nonempty. 
We recall some basic definitions and central results on Chern classes of ample vector bundles, which will be useful later. Throughout the paper, we will use the convention that a projective bundle $\P(\sE)$ parametrizes one-dimensional quotients of $\sE$.

% \begin{definition}
%   Let $\sE$ be a vector bundle on a variety $X$. Then $\sE$ is ample (nef) if the line bundle $\sO_{\P(\sE)}(1)$ is ample (nef) on the projective bundle $\P(\sE)$.
% \end{definition}

The central fact we will use is that nef (ample) vector bundles have effective (and nonzero) Chern classes.
\begin{theorem}[{\cite[Theorem 8.2.1]{Laz04II},\cite[Corollary 8.2.2]{Laz04II} }]
  	\label{thm:ChernPositive}
	Let $X$ be an irreducible projective variety or scheme of dimension $n$ and let $E$ be a nef vector bundle on $X$. Then
	\[ \int_X c_n(E) \geq  0\]
	The same statement holds of $E$ is replaced by a nef $\Q$-twisted bundle $E\!<\! \delta \!>$. If $E$ is ample the inequality is strict.
\end{theorem}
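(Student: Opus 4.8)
The plan is to deduce the general nef (and nef $\Q$-twisted) case from the case of \emph{ample} bundles by a limiting argument, and to prove the ample case via the positivity of Schur polynomials in the Chern classes, whose ultimate source is the elementary positivity of nef line bundles. The base case to isolate is the rank-one statement: if $L$ is a nef line bundle on an irreducible projective $V$ of dimension $m$, then $\int_V L^m \geq 0$. Writing $L$ as the limit of the ample $\Q$-classes $L + \epsilon A$ for a fixed ample $A$, each satisfies $\int_V (L+\epsilon A)^m > 0$, and since $\epsilon \mapsto \int_V (L+\epsilon A)^m$ is a polynomial in $\epsilon$, letting $\epsilon \to 0^+$ gives $\int_V L^m \geq 0$. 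This continuity principle, that intersection numbers are polynomial in a twisting parameter, is the engine behind the whole reduction.

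Next I would reduce the vector-bundle statement to the ample case. Fixing an ample class $A$ on $X$, for every rational $\epsilon > 0$ the $\Q$-twisted bundle $E\langle\epsilon A\rangle$ is ample: on $\P(E)$ the tautological class $\sO_{\P(E)}(1)$ is nef and ample along the fibres of $\pi$, while $\epsilon\,\pi^*A$ is ample transverse to them, so their sum, which is the tautological class of $E\langle\epsilon A\rangle$, is ample. Granting $\int_X c_n\p{E\langle\epsilon A\rangle} \geq 0$ for these ample twists, and noting that $\int_X c_n\p{E\langle\epsilon A\rangle}$ is a polynomial in $\epsilon$ whose value at $\epsilon = 0$ is $\int_X c_n(E)$, the limit $\epsilon \to 0^+$ yields $\int_X c_n(E) \geq 0$. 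The $\Q$-twisted form of the theorem follows identically, replacing a nef $E\langle\delta\rangle$ by $E\langle\delta + \epsilon A\rangle$ and letting $\epsilon \to 0^+$.

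It remains to prove the ample case, which I regard as the crux. The cleanest route is the Fulton--Lazarsfeld theory of numerically positive polynomials: the top Chern class equals the Schur polynomial $c_n = s_{(1^n)}$ in the Chern roots, and for an ample bundle every nonzero such Schur polynomial integrates to a strictly positive number, giving $\int_X c_n(E) > 0$ when $\rk E \geq n$ (and $c_n = 0$ otherwise). The geometric heart of this statement, and the main obstacle to a fully self-contained argument, is the Bloch--Gieseker covering/normalization trick used to render the relevant bundle globally generated at the cost of a controlled rescaling of Chern classes. The underlying effectivity is transparent for a globally generated bundle: one splits off nowhere-vanishing general sections to reduce to rank exactly $n$, where $c_n(E)$ is literally the class of the zero scheme of a general section, a nonnegative count of points. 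It is this effectivity that, through the covering trick and the two limiting arguments above, propagates to every nef bundle.
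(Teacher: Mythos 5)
The paper does not prove this statement at all: it is quoted directly from \cite[Theorem 8.2.1]{Laz04II} and used as a black box, so the only meaningful comparison is with the proof in the cited source. Your sketch reproduces that proof's architecture: reduce the nef (and nef $\Q$-twisted) case to the ample case via the perturbation $E\langle\epsilon A\rangle$ and the fact that $\epsilon \mapsto \int_X c_n(E\langle\epsilon A\rangle)$ is a polynomial, then settle the ample case by Fulton--Lazarsfeld positivity of Schur polynomials (with $c_n = s_{(1^n)}$), whose engine is the Bloch--Gieseker covering trick together with the effectivity of $c_n$ for globally generated bundles of rank at least $n$. Two points deserve attention. First, your justification that $E\langle\epsilon A\rangle$ is ample when $E$ is nef (``ample along the fibres plus ample transverse to them'') is not an argument --- a sum of a nef class and a class that is positive only in some directions need not be ample. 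The correct elementary proof uses that $\xi = c_1(\sO_{\P(E)}(1))$ is $\pi$-ample, so $\xi + m\pi^*A$ is ample for some $m \gg 0$, and then for $0 < \epsilon \leq m$ one writes
\[ \xi + \epsilon\pi^*A \;=\; \Bigl(1 - \tfrac{\epsilon}{m}\Bigr)\xi \;+\; \tfrac{\epsilon}{m}\bigl(\xi + m\pi^*A\bigr), \]
a sum of a nef and an ample class, hence ample; this is \cite[Proposition 6.2.11]{Laz04II}. Second, the ample case you actually need is for ample \emph{$\Q$-twisted} bundles $E\langle\epsilon A\rangle$, while Fulton--Lazarsfeld is proved for honest bundles; the passage between the two is itself a Bloch--Gieseker covering argument (pull back by a finite covering on which the twist becomes integral, and use that degrees only get rescaled by the covering degree), so you are invoking that trick in two distinct places, which your write-up conflates.

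These are gaps of precision rather than of strategy: your argument is a correct proof modulo the Fulton--Lazarsfeld theorem, which is where essentially all the depth lies, and there is no circularity in citing it, since Schur positivity for ample bundles is established independently of the nef statement --- indeed this is exactly how the cited source derives the present theorem. Remaining loose ends you should record: when $\rk E < n$ the vanishing $c_n(E\langle\delta\rangle) = 0$ requires the binomial formula for twisted Chern classes (it is not just the untwisted vanishing); and the statement allows irreducible projective \emph{schemes}, whereas Fulton--Lazarsfeld is stated for varieties, so one must pass to $X_{\mathrm{red}}$ and use that the fundamental cycle of an irreducible scheme is a positive multiple of that of its reduction.
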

It is a straightforward consequence of this theorem that for a nef vector bundle and $j \leq n$, we have $c_j(E) \geq 0$, with strict inequality if the bundle is ample.

% \begin{corollary}[{\cite[Corollary 8.2.2]{Laz04II}}]

% 	Suppose that $E$ is ample and that $e=\rk(E) \geq n$. Then 
% 	\[\int_X c_n(E) > 0 \]
% \end{corollary}

\section{Complete intersections}
\label{sec:CompleteIntersection}
Let $Y$ be a smooth projective toric variety and $H_1, \dots, H_k$ ample hypersurfaces, such that 
\[X \coloneqq H_1 \cap \cdots \cap H_k\]
is a smooth complete intersection. Furthermore, we will assume that $X$ has dimension at least 3, hence $Y$ must have dimension at least $k+3$. Under these assumptions, a generalization of the Lefschetz Hyperplane theorem (See \cite[Remark 3.1.32]{Laz04II}) shows that 
\begin{equation}
  \label{eq:LefschetzHyperplane}
  H_2(X,\Z) \simeq H_2(Y,\Z)
\end{equation}
% In this situation, a generalization of the Lefschetz Hyperplane Theorem by Sommese gives us a description of the second homology group:
% \begin{proposition}
% 	\label{prop:HomologyIsomorphic}
% 	Let $X$, $Y$ be as above, then $H_2(X,\Z) \simeq H_2(Y,\Z)$.
% \end{proposition}
% \begin{proof}
%   A complete intersection of $k$ ample hypersurfaces is the vanishing locus of a section of an ample vector bundle of rank $k$. The statement then follows from Sommese's version of the Lefschetz Hyperplane Theorem \cite[Theorem 7.1.1]{Laz04II}
% \end{proof}

Together with \cref{thm:CasagrandeSpan}, this suggests a strategy for proving the Integral Hodge Conjecture on complete intersections of ample hypersurfaces in a smooth projective toric variety. If we can prove that $X$ contains a representative of each contractible class, these curve classes generate $H_2(Y,\Z)$ by \cref{thm:CasagrandeSpan}. Hence they generate $H_2(X,\Z)$ by \eqref{eq:LefschetzHyperplane}, so the Integral Hodge Conjecture holds for $X$.
% We will first check that the dimensions of the contractions are such that we expect $X$ to contain curves of a given contractible curve class $C$. Then we will prove that $X$ in fact contains curves with this class. The exceptional loci of contractions of contractible curves on toric varieties are projective bundles by \cref{prop:ContractionStructure1}, and the curves with class $C$ are lines in the fibers of this bundle. So we must investigate when a complete intersection contains a line in some fiber of a projective bundle. In \cref{sec:Examples}, a simple example illustrates the strategy used to prove \cref{thm:CompleteIntersectionIntroduction}. Another simple case of the strategy we will use is the following result by Koll\'ar from the appendix to \cite{Bor91}: 

When $X$ is a hypersurface and the contraction of the contractible class is a $\P^1$-bundle, the following result by Koll\'ar from the appendix to \cite{Bor91} is an example of this strategy. 
\begin{lemma}
	\label{lem:KollarBundle}
	Let $B$ be a normal projective variety.
	\begin{enumerate}[label = \emph{(\roman*)}]
		\item Let $g \from E \to B$ be a $\P^1$-bundle. Let $X \subset E$ be a subvariety such that $\restr{g}{X} \from X \to B$ is finite of degree 1. If $X$ is ample, then $\dim B \leq 1$
		
		\item Let $g \from E \to B$ be a conic bundle. Let $X$ be a subvariety such that $\restr{g}{X} \from X \to B$ is finite of degree $\leq 2$. If $X$ is ample, then $\dim B \leq 2$.
\end{enumerate}
\end{lemma}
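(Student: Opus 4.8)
The plan is to realize $X$, in each case, as the zero locus of a \emph{nowhere-vanishing} section of an ample vector bundle on $B$ whose rank equals one more than the fibre degree of $\restr{g}{X}$, and then to contradict \cref{thm:ChernPositive} unless $\dim B$ is small. The positivity of $X$ is turned into positivity of this bundle, while finiteness of $\restr{g}{X}$ forces its top Chern class to vanish.

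For (i), I would first use that $B$ is normal and $\restr{g}{X}$ is finite of degree $1$: by Zariski's main theorem it is an isomorphism, so $X$ is a section of $g$. Writing $E = \P(\sF)$ for a rank-$2$ bundle $\sF$, the section corresponds to a quotient $\sF \twoheadrightarrow \sL$ with line-bundle kernel $\sK$, giving $0 \to \sK \to \sF \to \sL \to 0$. Twisting by $\sK^{-1}$ does not change $\P(\sF)$ but replaces $\sO_{\P(\sF)}(1)$ by $\sO_{\P(\sF)}(1)\otimes g^{*}\sK^{-1}$, and a short computation of the class of the section shows this arranges the divisor $X$ to be exactly $\sO_{\P(\sF')}(1)$ for $\sF' := \sF\otimes\sK^{-1}$, which now sits in $0\to\sO_B\to\sF'\to\sL'\to0$. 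By the very definition of an ample vector bundle, ``$X$ ample'' is equivalent to ``$\sF'$ ample.'' The inclusion $\sO_B\hookrightarrow\sF'$ is a sub-bundle, hence a nowhere-vanishing section, so $c_2(\sF')=0$ by the Whitney formula. If $\dim B\geq 2$ I restrict to an irreducible surface $S\subset B$ cut out by general hyperplanes (Bertini); then $\sF'|_S$ is ample of rank $2=\dim S$, so $\int_S c_2(\sF'|_S)>0$ by \cref{thm:ChernPositive}, contradicting $c_2(\sF')=0$. Hence $\dim B\leq 1$.

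For (ii) the same strategy runs one dimension higher. Pushing $\sO_E(X)$ forward along the conic bundle gives $\mathscr{G} := g_{*}\sO_E(X)$, which is locally free of rank $3$ (on each fibre $\sO_E(X)$ is a degree-$2$ line bundle on a genus-$0$ conic, so $h^0=3$, $h^1=0$), and the section cutting out $X$ pushes to $\tau\in H^0(B,\mathscr{G})$. Since $\restr{g}{X}$ is finite, $X$ contains no fibre, so $\tau$ vanishes nowhere; as before this yields $0\to\sO_B\to\mathscr{G}\to\sQ\to0$ and $c_3(\mathscr{G})=0$. The key point is that $\mathscr{G}$ is ample: in the model where the bundle is a $\P^1$-bundle $\P(\sF')$ and $X$ is a bisection $X\in|\sO_{\P(\sF')}(2)\otimes g^{*}\gamma|$, one has $\mathscr{G}=\Sym^2\sF'\otimes\gamma$, and ampleness of $\sO_E(X)$ translates, through the formalism of $\Q$-twisted bundles, into ampleness of $\sF'\langle\tfrac12\gamma\rangle$, whose second symmetric power is $\Sym^2(\sF'\langle\tfrac12\gamma\rangle)=\Sym^2\sF'\otimes\gamma=\mathscr{G}$; since symmetric powers of ample bundles are ample, $\mathscr{G}$ is ample. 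Then if $\dim B\geq 3$, restricting to an irreducible threefold $T\subset B$ gives $\int_T c_3(\mathscr{G}|_T)>0$ by \cref{thm:ChernPositive}, contradicting $c_3(\mathscr{G})=0$; hence $\dim B\leq 2$.

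The main obstacle is the conic-bundle geometry in (ii). A genuine conic bundle may have singular or non-reduced fibres, so before the clean pushforward and ampleness computation one must either reduce to the $\P^1$-bundle model above — replacing $E$ by the $\P^1$-bundle swept out by the relative linear spans of the degree-$\leq 2$ fibres of $\restr{g}{X}$, a construction defined only generically and extended by taking closures — or verify directly that $\mathscr{G}=g_{*}\sO_E(X)$ inherits ampleness from $\sO_E(X)$. Controlling ampleness of $\mathscr{G}$ through this reduction, together with the loci where the two points of a fibre collide or the conic degenerates, is where the real work lies; everything else is a formal consequence of \cref{thm:ChernPositive} and Chern-class bookkeeping.
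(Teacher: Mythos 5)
Your proposal is correct and follows essentially the same route as the paper's proof: push $\sO(X)$ forward to an ample vector bundle on $B$ (the paper's $\sE\otimes\sL$ in (i) and $\Sym^2\sE\otimes\sL$ in (ii), which match your $\sF'$ and $\mathscr{G}$ up to normalization), observe that finiteness of $\restr{g}{X}$ makes the induced section nowhere-vanishing so the top Chern class vanishes, and contradict \cref{thm:ChernPositive}. The extra details you supply --- Zariski's main theorem in (i), the $\Q$-twist argument for ampleness of the symmetric square, and restricting to a subvariety of the right dimension before invoking positivity --- are steps the paper asserts without proof, and the genuine conic-bundle difficulty you flag in (ii) is likewise outside the paper's own proof, which explicitly treats only the Zariski-locally-trivial ($\P^1$-bundle) case and defers the general statement to Koll\'ar's appendix to \cite{Bor91}.
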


Clearly, if the restriction $\restr{g}{X}$ is not finite, then $X$ contains a fiber of $g$, which is a contractible curve. Koll\'ar proves \cref{lem:KollarBundle} by constructing a vector bundle $\sE$ such that if $\restr{g}{X}$ is finite then the top Chern class of $\sE$ is zero. Ampleness, together with \cref{thm:ChernPositive}, then gives a contradiction if $\dim B \geq 1$. We will use a similar idea to prove \cref{thm:CompleteIntersectionIntroduction}. First we relate contractible curves on $X$ to Chern classes of a vector bundle, and then we use \cref{thm:ChernPositive} to prove that the Chern classes are nonzero. However, straightforwardly applying \cref{thm:ChernPositive} will not be sufficient, since the relevant vector bundles are  nef, but not necessarily ample.

\subsection{Setup}
Fix a contractible class $[C]$ in the toric ambient variety $Y$. Let $E = \P(\sE) \to B$ be the exceptional locus of the contraction of $[C]$ on $Y$. The goal is to prove that $X$ contains a curve $C$ of this class, which is the class of a line in a fiber of $\pi \from E \to B$. To do this, we construct and study a vector bundle on the relative Grassmannian $\Gr(2,\sE)$, which defines the relative Fano scheme of lines of the complete intersection $X$.  

Let the complete intersection $X$ be defined by the intersection of the ample hypersurfaces $H_1, \dots, H_k$. For $i= 1,\dots,k$, $d_i \geq 1$ are integers, and $\sL_i$ are line bundles on $B$, such that the line bundle $\sO_{\P(\sE)}(H_i)$ is isomorphic to $\sO_{\P(\sE)}(d_i) \otimes \pi^*\sL_i$. The lines in the fibers of $\P(\sE) \to B$ are parametrized by the relative Grassmannian $\Gr(2,\sE)$, with projection map $p \from \Gr(2,\sE) \to B$. We will denote the tautological rank 2 subbundle on $\Gr(2,\sE)$ by $\sS \subset p^*\sE$.

\begin{definition}
\label{def:MDefinition}
With notation as above, for a complete intersection $X$ and contraction of a contractible class $[C]$, we define the following vector bundle on $\Gr(2,\sE)$.
\begin{equation}
  \label{eq:MDefinition}
  \mathscr{M}_{X,C} \coloneqq \bigoplus_{i=1}^k \Sym^{d_i} \sS^* \otimes p^*\sL_i.
\end{equation}
We use the subscripts when we wish to indicate the dependence on the contraction of $[C]$, and the line bundles $\sO(H_i)$, where $H_i$ are the hypersurfaces defining $X$.
\end{definition}

This is a bundle on $\Gr(2,\sE)$ of rank $r = \sum_{i=1}^k (d_i+1)$, and is a quotient of
\[\bigoplus_{i=1}^k \Sym^{d_i}\left(p^*\sE \right) \otimes p^*\sL_i = p^* \left(\bigoplus_{i=1}^k \Sym^{d_i}\sE \otimes \sL_i \right). \]
Furthermore, the complete intersection $X = H_1 \cap \cdots \cap H_k$ induces a section of $\mathscr{M}_{X,C}$. The section of $\mathscr{M}_{X,C}$ induced by $X$ vanishes precisely at the lines in fibers of $\pi \from E \to B$ contained in $X$. Therefore, if the top Chern class $c_r(\mathscr{M}_{X,C})$ is nonzero, then any section, and in particular the section induced by $X$, must vanish at some point of $\Gr(2,n+1)$. So our goal in this section is to prove that $c_r(\mathscr{M}_{X,C})$ is nonzero.

It will be useful that this vector bundle has the following positivity property.
\begin{lemma}
  \label{lem:NefPerturbation}
  Assume $\sO(d_i) \otimes \pi^*\sL_i$, $i=1,\dots,k$ are ample line bundles on $\pi \from \P(\sE) \to B$. Let $\mathscr{M}$ be the vector bundle $\bigoplus_{i=1}^k \Sym^{d_i} \sS^* \otimes p^*\sL_i$ on the relative Grassmannian $p \from \Gr(2,\sE) \to B$. Then for any line bundle $\mathscr{A}$ on $B$,  $\mathscr{M} \otimes \epsilon p^* \mathscr{A}$ is nef for all sufficiently small positive $\epsilon$.
\end{lemma}
\begin{proof}
  We first consider the case $k=1$. After a suitable $\Q$-twist of $\sE$, we may assume that $\mathscr{L}_1 = \sO_B$, and since $\sO_{\P(\sE)}(d_1) \otimes \pi^*\mathscr{L}_1 = \sO_{\P(\sE)}(d_1)$ is ample, the vector bundle $\sE$ will be ample as well. For this $\sE$, the bundle $\mathscr{M}$ is equal to $\Sym^{d_1}(\sS^*)$, so $\mathscr{M} \otimes \epsilon p^*\mathscr{A} = \Sym^{d_1}(\sS^*) \otimes \epsilon p^*\mathscr{A}$, which is a quotient of $p^*\left(\Sym^{d_1}\mathscr{E}\right) \otimes \epsilon p^*\mathscr{A}$. Since $\sE$ is an ample vector bundle on $B$, so is $\Sym^{d_1}(\mathscr{E})$. Hence, for any line bundle $\mathscr{A}$ on $B$ and all sufficiently small positive $\epsilon$, $\Sym^{d_1}\mathscr{E} \otimes \epsilon \mathscr{A}$ is an ample $\Q$-vector bundle on $B$. The vector bundle 
\[p^* \left( \Sym^{d_1}\mathscr{E} \otimes \epsilon \mathscr{A} \right)\]
is a pullback of an ample vector bundle, hence nef. Since $\mathscr{M} \otimes \epsilon p^*\mathscr{A}$ is a quotient of this bundle, it is also nef.

For $k > 1$, the argument for $k=1$ shows that for any line bundle $\mathscr{A}$ on $B$ and all $i = 1,\dots,k$, the vector bundle $\Sym^{d_i}(\sS^*) \otimes p^*\mathscr{L}_i \otimes \epsilon_i p^* \mathscr{A}$ is nef for all sufficiently small $\epsilon_i$. So for $\epsilon$ sufficiently small, $\Sym^{d_i}(\sS^*) \otimes p^*\mathscr{L}_i \otimes \epsilon p^* \mathscr{A}$ is nef for all $i$. Hence $\mathscr{M}$ is a direct sum of nef vector bundles and therefore nef.
\end{proof}
The special case $\mathscr{A} = \sO_B$ shows that in particular $\mathscr{M}_{X,C}$ is nef.

\subsection{Rank and Dimension}
The first thing to check is that the rank of $\mathscr{M}_{X,C}$ is less than or equal to the dimension of $\Gr(2,\sE)$ so it is possible for the top Chern class of $\mathscr{M}_{X,C}$ to be nonzero. We will see that when the divisor $-K_Y - \sum_{i=1}^k H_i$ intersects the contractible curve nonnegatively, it imposes bounds on the relevant dimensions and degrees. These bounds ensure that the rank of $\mathscr{M}_{X,C}$ is at most $\dim \Gr(2,\sE)$.

\begin{proposition}
	\label{prop:SumDegreesBound}
	Let $H_1, \dots, H_k$ be ample divisors in a smooth projective toric variety $Y$, and let $\pi \from E = \P(\sE) \to B$ be the exceptional locus of the contraction of a contractible class $[C]$. Let $d_i$ be integers, and $\mathscr{L}_i$ be line bundles on $B$, chosen such that $\sO_{\P(\sE)}(H_i) \simeq \sO(d_i) \otimes \pi^*\mathscr{L}_i$. Assume furthermore that $(-K_Y - \sum_{i=1}^k H_i) \insec C \geq 0$. Then we have the following inequality: 
	\begin{equation}
		\label{eq:CIDegreeBound}
		\dim F + \dim E \geq \dim Y + \sum_{i=1}^k d_i -1
	\end{equation}
	
\end{proposition}
\begin{proof}
  	 Let
  	\[x_1 + \cdots + x_e - a_1 y_1 - \cdots - a_r y_r =0 \]
  	be the wall relation corresponding $C$,
  	where the $x_i$ and $y_j$ are the generators of rays in the fan of $Y$. Using \cref{prop:ContractionStructure1} we find that \eqref{eq:CIDegreeBound} is equivalent to the inequality:
	\[e-1 + n - r  \geq n + \sum_{i=1}^r d_i - 1. \]
	It therefore suffices to prove the inequality $e-r \geq \sum_{i=1}^rd_i$. Note that
	\[ e - \sum_{i=1}^r a_i = -K_Y \insec C = \sum_{j=1}^k d_j + (-K_Y - \sum_{i=1}^k H_i) \insec C. \]
	By assumption, $(-K_Y - \sum_{i=1}^k H_i) \insec C \geq 0$, hence $e-\sum_i a_i \geq \sum_j d_j$. Since the $a_i$ are positive integers, we get $e-r \geq e-\sum_i a_i \geq \sum_j d_j$ as desired.
\end{proof}

\begin{remark}
	In \cite{Wis91}, Wi\'{s}niewski proves a similar inequality for extremal contractions of smooth, not necessarily toric, varieties.
\end{remark}

We can now find conditions such that the rank of the bundle $\mathscr{M}_{X,C}$ in \eqref{eq:MDefinition} does not exceed the dimension of $\Gr(2,\sE)$.
\begin{corollary}
	\label{cor:NonNegExpDim}
	Let $X = H_1 \cap \cdots \cap H_k$, be a complete intersection of ample divisors in a smooth projective toric variety $Y$ of dimension $n$, with $n \geq k+3$. Let $\pi \from  E = \P(\sE) \to B$ be the exceptional locus of the contraction of a contractible class $[C]$, and assume that $(-K_Y - \sum_{i=1}^kH_i) \insec C \geq 0$.Then the bundle $\mathscr{M}_{X,C}$ from \cref{def:MDefinition} has rank at most $\dim \Gr(2,\sE)$.
\end{corollary}
\begin{proof}
	The rank of $\mathscr{M}_{X,C}$ is $\sum_{i=1}^k(d_i+1)$, so we must prove the inequality:
	\begin{equation}
		\label{eq:CIDegreeBound2}
		\sum_{i=1}^k (d_i + 1) \leq \dim \Gr(2,\sE) = 2(\rk \sE - 2) + \dim B
	\end{equation}
        We wish to apply \cref{prop:SumDegreesBound}. Using \cref{prop:ContractionStructure1} we have, in the notation from \eqref{eq:CIDegreeBound} \[\dim \Gr(2,\sE) = \dim F + \dim E - 2.\]
         This gives the chain of inequalities
         \begin{align*}
         	\dim \Gr(2,\sE) &= \dim F + \dim E - 2 \geq \dim Y + \sum_{i=1}^k d_i - 3 \\
         	&\geq 3+k+\sum_{i=1}^k d_i - 3 = \sum_{i=1}^k(d_i+1)
         \end{align*}
        where the first inequality follows from \eqref{eq:CIDegreeBound} and the second uses that by assumption $\dim X \geq 3$, hence $\dim Y \geq 3+k$.
\end{proof}
 In other words, when $X = H_1 \cap \cdots \cap H_k$ is a complete intersection of ample hypersurfaces of dimension at least 3, and $\P(\sE) \to B$ a contraction of a contractible class $[C]$ such that $(-K_Y - \sum_{i=1}^kH_i) \insec C \geq 0$, a dimension estimate leads us to expect that $X$ contains curves of class $[C]$. In fact, for general choices of the $H_i$, the Fano scheme parametrizing these curves will have expected dimension. We will prove this here under the assumption that the Fano scheme is nonempty. This assumption holds by \cref{prop:PermutationPositive} in the next section.

\begin{proposition}
	\label{prop:ExpectedDimCI}
	Assume that $H_1,H_2,\dots,H_k$ are ample and general in their respective linear systems. Assume further that the relative Fano scheme of the intersection $X \coloneqq H_1 \cap \cdots \cap H_k$, with respect to a given contraction with exceptional locus $E = \P(\sE) \to B$, is nonempty. Then the relative Fano scheme of $X$ has the expected dimension.
\end{proposition}
\begin{proof}
	Let $V \coloneqq H^0(\sO_{\P(\sE)}(H_1)) \times \cdots \times H^0(\sO_{\P(\sE)}(H_k))$, and let $I$ be the incidence correspondence $I \coloneqq \set{ (l,f_1,\dots,f_k) \vert l \subset (f_1 = \cdots = f_k = 0)} \subset \Gr(2,\sE) \times V$. We first check that $I$ has the expected codimension, $\sum_{i=1}^k(d_i+1)$. We can check this by considering the fiber of $I \to \Gr(2,\sE)$, which has codimension $\sum_{i=1}^k(d_i+1)$ in $V$. Since by assumption the projection $I \to V$ is dominant, the general fiber of this projection must have codimension $\sum_{i=1}^k(d_i+1)$ in $\Gr(2,\sE)$.
\end{proof}

\subsection{Positivity}
% To prove that the bundle $\mathscr{M}$ has nonzero top Chern class, even though it is only nef for a general contraction, we will use an argument based on small perturbations of $\mathscr{M}$. 
% Consider the following diagram \todo{Remove the diagram altogether?}
% \begin{equation*}
% 	\begin{tikzcd} 
% 		\P(\sS^*) \arrow["f"]{r} \arrow["\phi"]{d}& \P(\sE) \arrow["\pi"]{d} \\
% 		\Gr(2,\sE) \arrow["p"]{r} & B
% 	\end{tikzcd}
% \end{equation*}
% The bundle $\mathscr{M}$ is equal to 
% \begin{equation}
%   \label{eq:MPushPull}
%   \mathscr{M} \simeq \phi_* f^* \bigoplus_{i=1}^k \sO_{\P(\sE)}(H_i)
% \end{equation}
% We first check that even after possibly perturbing $\mathscr{M}$ slightly, it remains nef. \todo{This should be placed together with the other part discussing nefness}

Our goal is to prove the following proposition, showing that the top Chern class of $\mathscr{M}_{X,C}$ from \cref{def:MDefinition} is not merely effective, but in fact nonzero.
\begin{proposition}
	\label{prop:PermutationPositive}
	Let $E = \P(\sE) \to B$ be the exceptional locus of a contraction of a contractible class $[C]$ on a smooth projective toric variety $Y$ of dimension $n \geq 3+k$. Let $X = H_1 \cap \cdots \cap H_k$ be a complete intersection of ample divisors on $Y$. Let $\mathscr{M}_{X,C}$ be as in \cref{def:MDefinition}, and assume that $(-K_Y - \sum_{i=1}^k H_i) \insec C \geq 0$. Then the top Chern class $c_r(\mathscr{M}_{X,C})$ is nonzero and effective, where $r=\sum_{i=1}^k(d_i+1)$ is the rank of $\mathscr{M}_{X,C}$.
\end{proposition}
Before we prove this result, we need some preliminary results on Chern classes of symmetric powers. In particular, we will to prove that the Chern classes of the $d$-th symmetric power of the rank 2 tautological subbundle on $\Gr(2,n+1)$ are effective and nonzero.
\begin{lemma}
	\label{prop:EffectiveSymProdChern}
	Let $\sR$ be a rank 2 bundle. Then for any symmetric power $\Sym^d \sR$, with $d \geq 2$, for $j \leq d$ the $j-$th Chern class of $\Sym^d \sR$ is of the form
	\[c_j(\Sym^d \sR) = a_j c_1(\sR)^j + P_j(c_1(\sR),c_2(\sR)) \]
	where $P_j$ is a polynomial with nonnegative integral coefficients and $a_j > 0$ for $j\leq d$. The top Chern class $c_{d+1}(\Sym^d \sR)$ is of the form:
	\[c_{d+1}(\Sym^d \sR) = a_{d+1} c_1(\sR)^{d-1}c_2(\sR) + P_{d+1}(c_1(\sR),c_2(\sR))\]
	where $P_{d+1}$ is a polynomial with nonnegative integral coefficients and $a_{d+1} > 0$.
\end{lemma}
\begin{proof}
	Let $\alpha,\beta$ be the Chern roots of $\sR$. If $d$ is odd, the Chern polynomial $c(\Sym^d \sR)$ is given by:
	\begin{align*}
		&\product_{i=0}^d (1+(d-i)\alpha + i\beta) \\
		&= \product_{i<\frac{d}{2}} (1+(d-i)\alpha+i\beta)(1+i\alpha + (d-i)\beta) \\
		&= \product_{i<\frac{d}{2}} (1 + d(\alpha + \beta) + i(d-i)(\alpha + \beta)^2 + (d-2i)^2 \alpha \beta)\\
		&= \product_{i<\frac{d}{2}} (1 + dc_1(\sR) + i(d-i) c_1(\sR)^2 + (d-2i)^2 c_2(\sR))
	\end{align*}
	If $d$ is even, the Chern polynomial  $c(\Sym^d \sR)$ is given by:
	\begin{align*}
		&\product_{i=0}^d (1+(d-i)\alpha + i\beta)\\
		&= (1+\frac{d}{2} \alpha + \frac{d}{2}\beta )\product_{i<\frac{d}{2}} (1+(d-i)\alpha+i\beta)(1+i\alpha + (d-i)\beta)\\
		&= (1+\frac{d}{2} \alpha + \frac{d}{2}\beta ) \product_{i<\frac{d}{2}} \left( 1 + d(\alpha + \beta) + i(d-i)(\alpha + \beta)^2 + (d-2i)^2 \alpha \beta \right) \\
		&= (1+\frac{d}{2} c_1(\sR) )\product_{i<\frac{d}{2}} \left( 1 + dc_1(\sR) + i(d-i) c_1(\sR)^2 + (d-2i)^2 c_2(\sR) \right)
	\end{align*}
	From this description we see that as long as $j < d+1$, $c_j(\Sym^d\sR)$ will have the form:
	\[c_j(\Sym^d \sR) = a_j c_1(\sR)^j + P_j(c_1(\sR),c_2(\sR)) \]
	with $a_j > 0$ for $j < d+1$, and all coefficients of $P_j$ nonnegative integers. The top Chern class will be of the form:
	\[c_{d+1}(\Sym^d \sR) = a_{d+1} c_1(\sR)^{d-1}c_2(\sR) + P_{d+1}(c_1(\sR),c_2(\sR))\]
	with $a_{d+1} > 0$ and all coefficients of $P_{d+1}$ nonnegative integers.
\end{proof}

Recall that if $X$ is an $N$-dimensional variety, we call a class $\alpha \in H^{2k}(X,\Z)$ is called nef if it has nonnegative intersection with the class of every $(N-k)$-dimensional subvariety of $X$. Using \cref{prop:EffectiveSymProdChern}, we can give the following description of the Chern classes of $\Sym^d \sS^*$.
\begin{lemma}
	\label{prop:SubbundleChernClasses}
	Let $Gr(2,n+1)$ be the Grassmannian of lines in $\P^n$, let $\sS$ be the rank 2 tautological subbundle. Then for $1 \leq j < d+1$, the Chern class $c_j(\Sym^d \sS^*)$ can be written as the sum of two terms
\[c_j(\Sym^d \sS^*) = a_{j}(c_1(\sS^*))^j + \alpha_j, \]
and $c_1(\sS^*)$ is the class of an ample divisor, and $\alpha_j$ is a nef and effective class. The top Chern class $c_{d+1}(\Sym^d \sS^*)$ can be written as
\[c_{d+1}(\Sym^d \sS^*) = a_{d+1}(c_1(\sS^*))^{d-1}c_2(\sS^*) + \alpha_{d+1}, \]
with $a_{d+1} > 0$, where $\alpha_{d+1}$ is a nef and effective class. 
\end{lemma}

\begin{proof}
	The Chern class of $\sS^*$ is the sum of Schubert cycles: $c(\sS^*) = 1 + \sigma_1 + \sigma_{11}$. The class $\sigma_1$ is an ample divisor class on $\Gr(2,n+1)$ since it is the pullback of a hyperplane via the Plücker embedding. Furthermore, $\sigma_{11}$ is the Schubert cycle of lines contained in a hyperplane $H \subset \P^n$. It follows that on the Grassmannian of lines in $\P^n$, any monomial in $\sigma_1$ and $\sigma_{11}$ is a nef and effective cycle.

 We then apply \cref{prop:EffectiveSymProdChern} to see that for $j \leq d$, 
	\[c_j(\Sym^d \sS^*) = a_j c_1(\sS^*)^j + \text{effective and nef cycles},\] with $a_j >0$, and the top Chern class 
	\[c_{d+1}(\Sym^d \sS^*) = a_{d+1}c_1(\sS^*)^{d-1}c_2(\sS^*) + \text{effective and nef cycles}.\]
	%Since $c_1(\sS^*)$ is ample and $c_2(\sS^*)$ is nef and effective, the conclusion holds.
\end{proof}

\begin{corollary}
	\label{cor:SumSubbundleChern}
	All Chern classes of the bundle $\bigoplus_{i=1}^k \Sym^{d_i}(\sS^*)$ are effective on $\Gr(2,n+1)$ the Grassmannian of lines in projective space. Furthermore, if $0 \leq j \leq \min(\sum_{i=1}^k (d_i +1), \dim \Gr(2,n+1))$, the Chern class $c_j(\bigoplus_{i=1}^k \Sym^{d_i}(\sS^*))$ is nonzero
\end{corollary}
\begin{proof}
	The Chern polynomial of a direct sum is the product of the Chern polynomial of the summands. \cref{prop:SubbundleChernClasses} describes the form of these Chern polynomials. In particular, we see that the $j$-th Chern classes contain a term of the form $bc_1(\sS^*)^\alpha c_2(\sS^*)^\beta$, with $\alpha + 2\beta = j$, and the coefficient $b$ is strictly greater than $0$. Furthermore, if $\alpha + 2\beta = j \leq \dim \Gr(2,n+1)$, we see that $c_1(\sS^*)^\alpha c_2(\sS^*)^\beta > 0$ by computing with the relevant Schubert cycles. 
\end{proof}

It follows from these results that when restricted to a fiber of $p \from \Gr(2,\sE) \to B$, the Chern classes of $\mathscr{M}_{X,C}$ are strictly positive, unless they vanish for dimensional reasons.

With this we are ready to prove \cref{prop:PermutationPositive} using a perturbation argument, based on \cref{lem:NefPerturbation}

% First observe that for any ample line bundle $\mathscr{A}$ on $B$, and any sufficiently small rational number $\epsilon$, all the $\Q$-line bundles 
% \[\sO_{\P(\sE)}(H_i) \otimes (-\epsilon)\pi^* \mathscr{A} \simeq \sO_{\P(\sE)}(d_i) \otimes \mathscr{L_i} \otimes (-\epsilon)\pi^* \mathscr{A}\]
% remain ample. Hence the $\Q$-line bundle $\mathscr{M} \otimes (-\epsilon)f^* \mathscr{A}$ remains nef, and the Chern classes $c_j(\sM \otimes (-\epsilon)f^* \mathscr{A})$ remain effective.
% Therefore, for any sufficently small $\epsilon$, the Chern classes of the small perturbation of $\mathscr{M}$ given by $\sM \otimes (-\epsilon) f^* \mathscr{A}$ remain pseudoeffective.
\begin{proof}[Proof of \cref{prop:PermutationPositive}]
Set $b \coloneqq \dim B$, and recall that $r = \sum_{i=1}^k(d_i+1)$ is the rank of the bundle $\mathscr{M}_{X,C}$. Let $D' \subset B$ be a smooth ample divisor, and set $D \coloneqq p^*D'$. By \cref{lem:NefPerturbation}, $\mathscr{M}_{X,C} \otimes -\epsilon D$ remains a nef vector bundle for sufficiently small $\epsilon$. So $\mathscr{M}_{X,C} \otimes -\epsilon D$ has effective Chern classes. The top Chern class of $\mathscr{M}_{X,C} \otimes -\epsilon D$ can be expressed as
\[c_r(\mathscr{M}_{X,C} \otimes - \epsilon D) = c_r(\mathscr{M}_{X,C}) - \epsilon D \insec c_{r-1}(\mathscr{M}_{X,C}) + \cdots + (-1)^b\epsilon^bD^b \insec c_{r-b}(\mathscr{M}_{X,C}).\]
Assume for contradiction that $c_r(\mathscr{M}_{X,C}) = 0$. Then since $D \insec c_{r-1}(\mathscr{M}_{X,C})$ is effective, we must have $D \insec c_{r-1}(\mathscr{M}_{X,C}) = 0$. Otherwise, $c_r(\mathscr{M}_{X,C} \otimes -\epsilon D)$ would not be effective for some small $\epsilon$, contradicting nefness of $\mathscr{M}_{X,C} \otimes -\epsilon D$. Let $B_1$ be the hypersurface $D'$. Then we must have $c_{r-1}(\restr{\mathscr{M}_{X,C}}{p^{-1}(B_1)}) = 0$. If $\dim B_1 = 0$, then $p^{-1}(B_1)$ is a union of fibers $F_1,\dots,F_N$ of $p$, and by \cref{cor:NonNegExpDim}, $r-1 \leq \dim(F_i)$. So on each fiber $F_i$, $\restr{M}{F_i}$ has strictly positive Chern classes by \cref{cor:SumSubbundleChern}. Hence $c_{r-1}(\restr{\mathscr{M}_{X,C}}{p^{-1}(B_1)})$ must also be strictly positive. This gives our contradiction.

If $\dim B_1 \geq 1$, we repeat the argument. We find that
\begin{align*}
   c_{r-1}(\restr{\mathscr{M}_{X,C}}{B_1} \otimes - \epsilon D) = c_{r-1}(\restr{\mathscr{M}_{X,C}}{B_1}) & - \epsilon D \insec c_{r-2}(\restr{\mathscr{M}_{X,C}}{B_1}) + \cdots \\ &+ (-1)^{b-1}\epsilon^{b-1}D^{b-1} \insec c_{r-b}(\restr{\mathscr{M}_{X,C}}{B_1}) \geq 0 
\end{align*}
for all sufficiently small $\epsilon$. In particular, we must have $D \insec c_{r-2}(\restr{\mathscr{M}_{X,C}}{B_1}) = 0$. So $c_{r-2}(\restr{\mathscr{M}_{X,C}}{p^{-1}(B_2)})$ must be 0, where $B_2 \subset B_1$ is a smooth subvariety representing the divisor $\restr{D'}{B_1}$. Repeating this construction if necessary, eventually we reach either $c_{r-b}((\restr{\mathscr{M}_{X,C}}{p^{-1}(B_b)}))$ if $r > b$, where $B_b$ is nonempty and has dimension 0, or $c_0((\restr{\mathscr{M}_{X,C}}{p^{-1}(B_r)})$ if $r \leq b$. So if $c_r(\mathscr{M}_{X,C})=0$, we must also have $c_{r-b}((\restr{\mathscr{M}_{X,C}}{p^{-1}(B_b)})) = 0$ or $c_0((\restr{\mathscr{M}_{X,C}}{p^{-1}(B_r)}) = 0$. The latter is impossible. In the former case, we conclude from \cref{cor:NonNegExpDim} that since $r \leq \dim(\Gr(2,\sE))$, also $r-b \leq \dim(p^{-1}(B_b))$. We can therefore apply \cref{cor:SumSubbundleChern} and find that $c_{r-b}((\restr{\mathscr{M}_{X,C}}{B_b}))$ is strictly positive. Since this is a contradiction, we conclude that $c_r(\mathscr{M}_{X,C})$ must be effective and nonzero.

\end{proof}

Using \cref{prop:PermutationPositive}, we get a condition for when a complete intersection of ample hypersurfaces $H_i$ of dimension at least 3 in a smooth projective toric variety contains a curve of a given contractible class.
\begin{corollary}
	\label{cor:NegativeCurveContained}
	Let $X = H_1 \cap \cdots \cap H_k$ be a smooth complete intersection of ample hypersurfaces in a smooth projective toric variety $Y$, and assume $\dim X \geq 3$.
Let $Y \to Z$ be the contraction of a contractible class $[C]$, with exceptional locus $E = \P(\sE) \to B$. If $(-K_Y - \sum_{i=1}^k H_i) \insec C \geq 0$, then $X$ contains a curve with class $[C]$ in $N_1(X)$. 
\end{corollary}
\begin{proof}
The complete intersection $X$ induces a section of the bundle 
	\[\mathscr{M}_{X,C} = \bigoplus_{i=1}^k \Sym^{d_i}(\sS^*) \otimes p^*\mathscr{L}_i\]
	 on $\Gr(2,\sE) \to B$. From \cref{prop:PermutationPositive} we see that the top Chern class is effective and nonzero. So any section of $\mathscr{M}_{X,C}$ must vanish at some point. Since the section of $\mathscr{M}_{X,C}$ induced by $X$ vanishes precisely at curves of class $[C]$ contained in $X$, we may conclude. 
\end{proof}
If the divisors $H_i$ defining $X$ are general in their respective linear systems, then by \cref{prop:ExpectedDimCI}, the contractible curves contained in $X$ are parametrized by a space of expected dimension. If the $H_i$ are not general, the space of contractible curves contained in $X$ will have at least expected dimension, but is potentially larger.
\begin{remark}
	Compare this result to \cite[Theorem 4.3]{HLW02}, which applies to the similar setting where $X \subset Y$ is a smooth ample divisor in a smooth variety of dimension at least 4, and $C$ is an extremal curve class with $-K_X \insec C \geq 0$. Then \cite[Theorem 4.3]{HLW02} implies that $X$ contains a curve whose class in $N_1(X)$ is some multiple of $C$.
\end{remark}

\cref{thm:CompleteIntersectionIntroduction} follows easily from  \cref{cor:NegativeCurveContained}.
\begin{theorem}[{= \cref{thm:CompleteIntersectionIntroduction}}]
	\label{thm:CompleteIntersection}
	Let $Y$ be a smooth projective toric variety, an let $X \subset Y$ be a smooth complete intersection of ample hypersurfaces $H_1,\dots,H_k$, with $\dim X$ at least 3. Assume furthermore that $-K_Y - \sum_{i=1}^kH_i$ is nef. Then the Integral Hodge Conjecture for curves holds for $X$, and in fact $H_2(X,\Z)$ is generated by classes of rational curves.
\end{theorem}

\begin{proof}
	By assumption $-K_Y - \sum_{i=1}^kH_i$ is nef, so for any contractible class $[C]$ in $H_2(Y,\Z)$, the hypotheses of \cref{cor:NegativeCurveContained} is satisfied. Thus, $X$ contains representatives of all contractible classes. Since the classes of contractible curves span $H_2(X,\Z)$ by \cref{thm:CasagrandeSpan}, we can conclude that the Integral Hodge Conjecture holds for $X$. Since all contractible curves are rational, the second statement also follows.
\end{proof}
More precisely, this proves \cref{prop:SemigroupGenerated} from the Introduction.
\subsection{Example}
\label{sec:ExampleLineBlowup}
 We end with an example, which illustrates \cref{thm:CompleteIntersection} and its proof in a simple case. 
	Let $l \subset \P^4$ be a torus-invariant line, and let $Y$ be the blow-up of $\P^4$ along $l$. The Picard group of $Y$ is generated by $H$, the pullback of the hyperplane class in $\P^4$ and the exceptional divisor $E$. The homology group $H_2(Y,\Z)$ is generated by $h$, the pullback of a general line in $\P^4$ and $e$, the class of a line in a positive-dimensional fiber of the blow-up map.
	The cone of curves on $Y$ is generated by the extremal classes $e$ and $h-e$ , with contractions given by the blow-up map $b \from Y \to \P^4$ and the resolution of the projection of $\P^4$ from $l$, $p \from Y \to \P^2$ respectively.
	
	The toric variety $Y$ is smooth, projective and Fano, with anticanonical divisor $5H - 2E$. Let $X$ be a general smooth anticanonical hypersurface. Then $X$ is the strict transform of a quintic hypersurface containing the line $l$ with multiplicity two. By the Lefschetz Hyperplane Theorem, $H_2(X,\Z) \simeq H_2(Y,\Z)$, which is generated by the extremal curve classes on $Y$. To check the Integral Hodge Conjecture for Curves on $X$ we should therefore check that $X$ contains representatives of each of these curve classes. We will look at each of these in turn.
	
	\begin{claim}
		The anticanonical hypersurface $X$ contains a curve with class $e$.
	\end{claim}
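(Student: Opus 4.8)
The plan is to realise lines of class $e$ as the zero locus of a section of an explicit rank-$3$ vector bundle on a $3$-dimensional parameter space, and to force this zero locus to be non-empty by showing the relevant top Chern class is positive.

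First I would describe the curves of class $e$ and their parameter space. The exceptional divisor $E = \P(N_{l/\P^4})$ is a $\P^2$-bundle over $l \simeq \P^1$; since $N_{l/\P^4}\simeq\sO_{\P^1}(1)^{\oplus3}$ we in fact have $E\simeq\P^1\times\P^2$. A curve of class $e$ is a line inside one of the $\P^2$-fibers, so the set of such curves is naturally a $(\P^2)^\vee$-bundle over $l$, namely the smooth $3$-fold $M=\P^1\times(\P^2)^\vee$, equipped with a universal line $\pi\from\mathcal{C}\to M$ and an evaluation morphism $\mathrm{ev}\from\mathcal{C}\to E\incl Y$.

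Next I would build the bundle that detects containment in $X$. On a line $C$ of class $e$ one has $\sO_Y(-K_Y)|_C\simeq\sO_{\P^1}(-K_Y\insec e)=\sO_{\P^1}(2)$, so the push-forward $\mathcal{V}\coloneqq\pi_*\,\mathrm{ev}^*\sO_Y(-K_Y)$ is locally free of rank $h^0(\P^1,\sO(2))=3$ on $M$ (cohomology and base change apply since $H^1(C,\sO(2))=0$). The equation $s\in H^0(Y,\sO_Y(-K_Y))$ cutting out $X$ restricts, fibrewise, to a section $\sigma\in H^0(M,\mathcal{V})$, and by construction $C\subset X$ if and only if $\sigma$ vanishes at the point $[C]\in M$. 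Thus it suffices to show that $\sigma$ has a zero; note that this argument, if it succeeds, produces a line of class $e$ on \emph{every} anticanonical hypersurface, not merely a general one.

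Finally, since $\dim M = 3 = \rk\mathcal{V}$, the section $\sigma$ is forced to vanish somewhere as soon as $\int_M c_3(\mathcal{V})\neq0$: a nowhere-vanishing section would split off a trivial line subbundle and annihilate the top Chern class. I would deduce this non-vanishing from the positivity of $\mathcal{V}$, which is where the ampleness hypothesis enters. Computing on $E\simeq\P^1\times\P^2$ gives $H|_E=\sO(1,0)$ and, via adjunction, $E|_E=\sO(1,-1)$, so $\sO_Y(-K_Y)|_E=\sO_{\P^1\times\P^2}(3,2)$; pushing forward along the universal line identifies $\mathcal{V}$ with $\sO_{\P^1}(3)\boxtimes\Sym^2\sS^\vee$, where $\sS$ is the tautological subbundle on $(\P^2)^\vee\simeq\Gr(2,3)$. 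Either \cref{thm:ChernPositive}, once ampleness of $\mathcal{V}$ is checked, or a direct Chern-class computation then yields $\int_M c_3(\mathcal{V})>0$. The main obstacle is precisely this last step: verifying that $\mathcal{V}$ is positive enough—ultimately a consequence of $-K_Y$ being ample and of the fibres of $E$ being $\P^2$'s on which $\sO_Y(-K_Y)$ restricts to $\sO(2)$—rather than the (routine) identification of $M$ and the construction of $\mathcal{V}$.
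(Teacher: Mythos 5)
Your proposal is correct, but only through one of the two routes you offer, and it is a genuinely different argument from the one the paper gives for this claim. The paper's proof is elementary: it restricts $X$ to $E \simeq \P^2\times\P^1$, where $X$ cuts out a divisor of class $\sO(2,3)$, \ie a conic bundle over $l$; the discriminant of this conic bundle is a degree-$9$ form on $\P^1$, so reducible conic fibers exist ($9$ of them for general $X$), and any line in such a fiber has class $e$. What you have done instead is specialize the paper's \emph{general} machinery from \cref{sec:CompleteIntersection} to this example: your parameter space $M = \P^1\times(\P^2)^\vee$ is the relative Grassmannian $\Gr(2,\sE)$, and your $\mathcal{V} = \sO_{\P^1}(3)\boxtimes\Sym^2\sS^*$ is exactly the paper's bundle $\mathscr{M} = \Sym^{d}\sS^*\otimes p^*\sL$ with $d=2$, $\sL = \sO_{\P^1}(1)$ absorbed into the box product. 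Your approach buys a statement for \emph{every} anticanonical member (not just general ones) together with the enumerative count $\int_M c_3(\mathcal{V}) = 18$, which is reassuringly consistent with the paper's count of $9$ reducible conics, each containing two lines; the paper's approach buys brevity.

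One warning about the step you yourself flag as the main obstacle: the route via \cref{thm:ChernPositive} cannot be made to work, because $\mathcal{V}$ is \emph{not} ample, and your heuristic that ampleness should follow from ampleness of $-K_Y$ on the $\P^2$-fibers is false --- ampleness does not survive the pushforward $\pi_*\,\mathrm{ev}^*$. Concretely, on $(\P^2)^\vee\cong\Gr(2,3)$ one has $\sS^*\cong T_{(\P^2)^\vee}(-1)$, whose restriction to a line (a pencil of lines through a point of $\P^2$) is $\sO\oplus\sO(1)$; hence $\Sym^2\sS^*$, and therefore $\mathcal{V}$ restricted to any slice $\set{pt}\times(\P^2)^\vee$, has trivial summands on such pencils and is only nef and globally generated. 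This failure of ampleness is not an accident of the example: it is precisely why the paper needs the perturbation argument of \cref{prop:PermutationPositive} in the general case. So your proof stands only through the direct computation, which does work: writing $t$ for the point class on $\P^1$ and $h$ for the hyperplane class on $(\P^2)^\vee$, one has $c_2(\Sym^2\sS^*) = 2h^2+4h^2 = 6h^2$ and $c_3(\Sym^2\sS^*) = 4h^3 = 0$, so $c_3(\mathcal{V}) = c_3(\Sym^2\sS^*) + 3t\,c_2(\Sym^2\sS^*) = 18\,th^2 \neq 0$, forcing your section $\sigma$ to vanish somewhere.
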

	\begin{proof}
		Consider the exceptional locus $E$ of the blow-up map $b \from Y \to \P^4$. Then $E$ is isomorphic to $\P^2 \times \P^1$, with the blow-up map as the second projection. As a divisor, $X$ restricts to a divisor of class $\sO(2,3)$ on $E$. On each fiber of this bundle, $X$ restricts to a plane conic, and some of these plane conics will be reducible. In fact, one can check that there will be 9 reducible fibers when $X$ is general. A line in any of these reducible fibers is a curve in $X$ of class $e$.
	\end{proof}
	
	\begin{claim}
		The anticanonical hypersurface $X$ contains a curve with class $h-e$.
	\end{claim}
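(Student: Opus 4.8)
The plan is to exploit the second extremal contraction $p \from Y \to \P^2$, the resolution of the projection of $\P^4$ away from $l$. Since the class $h-e$ is nef and its curves sweep out $Y$, \cref{prop:ContractionStructure1} realizes $p$ as a $\P^2$-bundle: the fibers are the strict transforms $F \cong \P^2$ of the $2$-planes of $\P^4$ containing $l$, and a curve of class $h-e$ is precisely a line inside such a fiber $F$. First I would record that $X$ cuts out a plane cubic on each fiber. Indeed, $-K_Y = 5H - 2E$ and a line of class $h-e$ satisfies $(5H - 2E)\insec(h-e) = 3$ (equivalently, the wall relation of $h-e$ is $x_1 + x_2 + x_3 = 0$, so $-K_Y \insec (h-e)$ is the sum $1+1+1 = 3$), whence $\restr{\sO_Y(-K_Y)}{F} \simeq \sO_{\P^2}(3)$ and $X \cap F$ is a cubic curve in $F$. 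Thus producing a curve of class $h-e$ on $X$ amounts to producing a fiber $F$ whose cubic $X \cap F$ acquires a line as a component.

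To find such a fiber I would set up the relative variety of lines in the fibers of $p$. Let $\mathcal{L} \to \P^2$ be the (dual) projective bundle parametrizing lines in the fibers of $p$; it has dimension $2 + 2 = 4$. Write $\pi \from \mathcal{I} \to \mathcal{L}$ for the universal line, a $\P^1$-bundle, and $q \from \mathcal{I} \to Y$ for the evaluation map. Because $\sO_Y(-K_Y)$ restricts to $\sO_{\P^1}(3)$ on every line of class $h-e$, the sheaf $\sE \coloneqq \pi_* q^* \sO_Y(-K_Y)$ is a vector bundle of rank $h^0(\P^1, \sO_{\P^1}(3)) = 4$ on $\mathcal{L}$. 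The section $s_X \in H^0(Y, \sO_Y(-K_Y))$ defining $X$ pulls back and pushes forward to a section $\sigma \in H^0(\mathcal{L}, \sE)$ whose vanishing at a point $[L]$ is exactly the statement that $\restr{s_X}{L} = 0$, i.e. $L \subset X$.

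Finally I would deduce the existence of a zero of $\sigma$ from positivity. Since $\rk \sE = 4 = \dim \mathcal{L}$, it suffices by \cref{thm:ChernPositive} to know that $\sE$ is ample, for then $\int_{\mathcal{L}} c_4(\sE) > 0$ and $\sigma$ cannot be nowhere vanishing; any zero $[L]$ gives a line $L \subset X$ of class $h-e$. I expect the ampleness of $\sE$ to be the real obstacle: $-K_Y$ is ample because $Y$ is Fano, and it has positive degree on every line of class $h-e$, so $\sE$ ought to be ample, but establishing this cleanly requires comparing $\sO_{\P(\sE)}(1)$ with $q^* \sO_Y(-K_Y)$ along the universal family. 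This is precisely the positivity input that the ampleness of the $H_i$ provides in the proof of \cref{thm:CompleteIntersectionIntroduction}, specialized here to the ampleness of $-K_Y$.
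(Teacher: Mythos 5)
Your geometric setup is correct and is exactly the paper's framework specialized to this example: realize $p \from Y \to \P^2$ as the $\P^2$-bundle $\P(\sO_{\P^2}^{\oplus 2} \oplus \sO_{\P^2}(1))$, observe that $X$ has relative degree $3$ and so cuts a cubic on each fiber, parametrize lines in the fibers by the four-dimensional relative Grassmannian, and reduce existence to nonvanishing of the top Chern class of the rank-four bundle $\pi_* q^* \sO_Y(-K_Y)$, whose induced section vanishes precisely at lines contained in $X$. (The paper's own proof of the claim is this dimension count, concluded by the enumerative assertion that a general $X$ contains finitely many such lines.)

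The gap is your final positivity step: the bundle $\pi_* q^* \sO_Y(-K_Y)$ is \emph{not} ample, so \cref{thm:ChernPositive} cannot be applied to it. Your bundle is the paper's $\sM = \Sym^{3}\sS^{*} \otimes f^{*}\sL_1$ on $\Gr(2,\sE)$, with $\sE = \sO_{\P^2}^{\oplus 2}\oplus\sO_{\P^2}(1)$ and $\sL_1 = \sO_{\P^2}(2)$. Fix a fiber $F \cong \P^2$ of $p$ and a point $x \in F$, and let $R \cong \P^1 \subset \Gr(2,\sE)$ be the pencil of lines in $F$ through $x$. The pencil $R$ is contracted by $\Gr(2,\sE) \to \P^2$, so the twist $f^{*}\sL_1$ is trivial on $R$; and evaluation at the base point $x$ gives a surjection of $\restr{\sM}{R}$ onto a degree-zero line bundle (concretely, $\restr{\sS^{*}}{R} \cong \sO \oplus \sO(1)$, hence $\restr{\sM}{R} \cong \sO \oplus \sO(1) \oplus \sO(2) \oplus \sO(3)$). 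A bundle with a degree-zero quotient on a curve is never ample. Your heuristic --- $-K_Y$ is ample and has positive degree on every line of class $h-e$ --- cannot repair this: ampleness fails not along the parametrized lines but along curves \emph{inside the parameter space}, namely exactly the pencils that the Grassmannian bundle contracts to a point of the base. This is precisely the phenomenon the paper flags when it says the pushforward bundle ``will no longer be ample, merely constructed from an ample bundle,'' and it is the entire reason for the perturbation argument of \cref{prop:PermutationPositive}; ampleness of $\sM$ holds only when the contraction is a $\P^1$-bundle, which is Koll\'ar's case in \cref{lem:KollarBundle}, not this one.

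The conclusion is still reachable from your setup, because $\sM$ is nef and globally generated and its top Chern class is in fact nonzero: this follows from \cref{prop:PermutationPositive}, which replaces ampleness by a perturbation $\sM \otimes (-\epsilon) f^{*}\mathscr{A}$ together with the effectivity and nonvanishing of the Chern classes of $\Sym^3 \sS^{*}$ on the $\Gr(2,3)$-fibers (\cref{prop:EffectiveSymProdChern}, \cref{prop:SubbundleChernClasses}, \cref{cor:SumSubbundleChern}); alternatively, in this small example one can evaluate $\int c_4(\sM)$ directly by Schubert calculus and check it is positive, which is the finite count of lines the paper refers to.
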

	\begin{proof}
		Consider the extremal contraction $p \from Y \to \P^2$. This gives $Y$ the structure of the projective bundle $p \from \P(\sO_{\P^2}^{\oplus 2} \oplus \sO_{\P^2}(1)) \to \P^2$, with fibers isomorphic to $\P^2$. If we denote by $\zeta$ the divisor corresponding to the line bundle $\sO_{\P(\sO_{\P^2}^{\oplus 2} \oplus \sO_{\P^2}(1))}(1)$,  $X$ is linearly equivalent to the divisor $3\zeta + 2p^*H_{\P^2}$, and restricts to cubic curves on the fibers of $p$. For a plane cubic curve to contain a line is a codimension 2 condition, so we would expect $X$ to contain lines in the fibers of $p$. In fact, one can check that a general anticanoncial hypersurface $X$ will contain 234 lines in fibers of $p$. This line is a curve of class $h-e$ contained in $X$.
	\end{proof}
\printbibliography

\typeout{get arXiv to do 4 passes: Label(s) may have changed. Rerun}
\end{document}